\def\BibTeX{{\rm B\kern-.05em{\sc i\kern-.025em b}\kern-.08em
    T\kern-.1667em\lower.7ex\hbox{E}\kern-.125emX}}
\def\BibTeX{{\rm B\kern-.05em{\sc i\kern-.025em b}\kern-.08em
    T\kern-.1667em\lower.7ex\hbox{E}\kern-.125emX}}
\newtheorem{theorem}{Theorem}
\newtheorem{corollary}{Corollary}
\newtheorem{lemma}{Lemma}
\newtheorem{assumption}{Assumption}
\newtheorem{prop}{Proposition}
\newtheorem{problem}{Problem}
\newtheorem{alg}{Algorithm}
\newtheorem{remark}{Remark}
\newtheorem{definition}{Definition}
\def\fnum@figure{\textcolor{subsectioncolor}{\sf Fig.~\thefigure}}
\def\fnum@table{\textcolor{subsectioncolor}{\sf TABLE~\thetable}}
\begin{document}
\title{Prescribed-time Control for Perturbed Euler-Lagrange Systems with Obstacle Avoidance}
\author{Amir Shakouri, \IEEEmembership{Member, IEEE}, and Nima Assadian, \IEEEmembership{Senior Member, IEEE}
\thanks{The authors are with the Department of Aerospace Engineering, Sharif University of Technology, Tehran, Iran (e-mail: \href{mailto:a_shakouri@outlook.com}{a$\_$shakouri@outlook.com}; \href{mailto:assadian@sharif.edu}{assadian@sharif.edu}).}}

\maketitle

\begin{abstract}
This paper introduces a class of time-varying controllers for Euler-Lagrange systems such that the convergence occurs at an arbitrary finite time, independently of initial conditions, and free of chattering. The proposed controller is based on a mapping technique and is designed in two steps: First, a conventional (obstacle avoidance) asymptotically stable controller is specified for the nominal system; then, by a simple substitution, a prescribed-time (obstacle avoidance) controller is achievable for the perturbed system. It is proved that the proposed scheme is uniformly prescribed-time stable for unperturbed systems and prescribed-time attractive for perturbed systems as it rejects matched disturbances with unknown upper bounds without disturbance observation. As an example, a two-link robot manipulator is considered for numerical simulations.
\end{abstract}
\begin{IEEEkeywords}
Prescribed-time control; Time-varying control; Robotic systems; Euler-Lagrange systems
\end{IEEEkeywords}

\section{Introduction}
\label{sec:I}
\IEEEPARstart{T}{he} finite-time control of nonlinear systems has been a challenging field of research in the past two decades in which the robustness and the smoothness issues have been drawn much attention. Specifically, for robotic applications, various endeavors have been made to enhance the performance of the system response. However, in finite-time control techniques, the convergence time is dependent on the initial condition, and that is why fixed-time control has recently been an attractive topic; it can provide an upper bound for the time of stabilization. The prescribed-time control goes one step further and makes it possible for the user to arbitrarily specify the convergence time by constructing a time-varying high-gain controller capable of rejecting matched disturbances. 

The possibility of commanding a highly perturbed nonlinear system to converge at an arbitrary time is pivotal in many crucial missions related to robotic systems, aircraft, and spacecraft where high precision is required while traditional control methods may fail according to the modeling inaccuracies. For instance, in the control of a quadrupedal robot, where the system is subject to unknown time-varying loads and constraints \cite{iqbal2020provably}, the prescribed-time scheme can be an effective solution for the control of system elements without using disturbance observers and system identification. In complex environments where several robots are cooperating \cite{petersen2019review}, being able to establish a definite time constraint on agents' coordination may cause safe operation in sensitive tasks. As another example, the attitude control of micro aerial vehicles, whose utilization is often limited by ineffective controllers in turbulent environments \cite{mohamed2014attitude}, can be accomplished by employing a control method capable of coping with unknown disturbances and stabilize the vehicle in a specified time.

The non-smooth feedback control \cite{in2,in3} and the terminal sliding mode control \cite{in5,in6,in7} are the most popular methods for finite-time control of nonlinear systems. To deal with uncertainties, different approaches are used in the literature for achieving an adaptive finite-time control and tracking scheme \cite{in8,in9,in11}. The fixed-time control is first proposed for perturbed linear systems by Polyakov \cite{in12}, and since then, many investigations are carried out to push the boundaries of this area. The non-singular terminal sliding mode control is proposed in \cite{in13} for a class of second-order nonlinear systems with matched disturbances. An output feedback scheme for perturbed double integrator systems is addressed in \cite{in14} while the stabilization of high-order integrator systems with mismatched disturbances is studied in \cite{in15}. The fixed-time attitude stabilization problem in the presence of disturbances, faults, and saturation is investigated in \cite{in16}. 

Time-varying approaches for prescribed-time stabilization of perturbed nonlinear systems are proposed by Song et al. \cite{in18,in19}. The behavior of time-varying methods under non-vanishing uncertainties is studied in \cite{in20}. These time-varying techniques are called \textit{prescribed-time controllers} (PTCs) since the time of convergence can be arbitrarily specified by the user. A similar approach called the generalized time transformation method is proposed in \cite{tran2020finite,krishnamurthy2020dynamic}. A class of PTCs with linear state/output feedback has also been studied in \cite{zhou2021prescribed}. Moreover, a class of PTCs with linear decay rate is proposed in \cite{shakouri2021prescribed} for normal form nonlinear systems under matched disturbance and uncertain input gain. For Euler-Lagrange systems, a proportional-integral PTC is recently studied in \cite{cui2021prescribed}. 

\subsection{Contribution of the paper}

This paper addresses a prescribed-time control method for Euler-Lagrange systems using a mapping technique that can be used for a wide range of robotic systems such as manipulators and spacecraft. The proposed PTCs in \cite{in18,in19,in20,tran2020finite,krishnamurthy2020dynamic,zhou2021prescribed,shakouri2021prescribed,cui2021prescribed} can be considered particular cases of a more general method in the current paper. The proposed scheme is constructed by two steps: (1) A conventional controller is designed for the nominal unperturbed system to result in an asymptotically stable closed-loop response, which is called the \textit{infinite-time controller} (ITC); then, (2) if some mild conditions are satisfied, by a simple substitution, a PTC is obtained that is \textit{uniformly prescribed-time stable} for the nominal system and \textit{prescribed-time attractive} for the perturbed system. The main idea is based on mapping the position vector by the use of time scaling functions of class $\mathcal{K}$ such that the position history of an asymptotically stable closed-loop response (under the ITC), from zero to infinity, are squeezed and mapped onto a prescribed time interval, from zero to an arbitrary time. The mapped trajectories are indeed the closed-loop responses of the system utilizing the PTC. Therefore, the path taken by an Euler-Lagrange system under the PTC is precisely the same path with the specified ITC, but happening in a different time interval. As a result, the proposed PTC is an obstacle avoidance controller (for fixed obstacles) if an obstacle avoidance ITC is designed for the system in the first step. A simple corollary of the proposed scheme shows how a gain scheduled proportional-derivative (PD) controller with gravity compensation can be a PTC for robotic systems. It is shown that the proposed time-varying controller can reject all bounded disturbances even when the upper bounds are unknown. Moreover, an output assessment method is presented by which any output of the system under the PTC, such as a Lyapunov function, can be corresponded to another output of the system when an ITC is used. A two degrees-of-freedom (2-DOF) robot manipulator has been considered for numerical simulations.

\subsection{Notation}

Let $\mathbb{M}^{m,n}$ denote the space of $m \times n$ real matrices and $\mathbb{M}^n$ its square analog. In addition, let $\mathbb{R}^n$ denotes the space of $n$-dimensional real vectors. The $n$-dimensional identity matrices is denoted by $\mathbb{I}_n$. The $ij$th entry (resp., $i$th entry) of matrix $M\in\mathbb{R}^{n\times m}$ (resp., vector $r\in \mathbb{R}^n$) is referred to by $M_{ij}$ (resp., $r_i$). For matrix $M\in\mathbb{M}^n$ we denote by $M^{-1}$ its inverse (if it exists). An inverse function is denoted by $f^{-1}(\cdot)$ for function $f(\cdot)$ (if the inverse exists). The symbol $\|\cdot\|$ denotes the 2-norm for vectors and matrices.

\section{Preliminaries}
\label{sec:II}

This section introduces all the preliminary formulations required before presenting the main results. First, The nominal and perturbed systems are formulated, the definitions are introduced, and the main problem of the paper is discussed. Finally, the mapping strategy of the paper is presented to be used in the subsequent results. 

\subsection{Basic Formulations and Problem Statement}
\label{subsec:II-C}

Consider the Euler-Lagrange system in the following second-order differential equation form \cite{spong}: 

\begin{equation}
\label{eq:1}
M(q)\ddot{q}+C(\dot{q},q)\dot{q}+g(q)=u
\end{equation}
where $q\in\mathbb{R}^n$ and $u\in\mathbb{R}^n$ denote the system variables and the control input vector, respectively. The matrix valued functions $M(\cdot):\mathbb{R}^n\rightarrow\mathbb{M}^n$ and $C(\cdot,\cdot):\mathbb{R}^n\times\mathbb{R}^n\rightarrow\mathbb{M}^n$ depend on the system characteristics such that $M(q)$ is positive-definite, $C(\dot{q},q)$ is linear in $\dot{q}$, and $\dot{M}(q)-2C(\dot{q},q)$ is skew-symmetric. 

Equation \eqref{eq:1} can be modified in order to contain an unmodeled time-dependent disturbance term $d(t):[0,\infty)\rightarrow\mathbb{R}^n$ as follows: 
\begin{equation}
\label{eq:2}
M(q)\ddot{q}+C(\dot{q},q)\dot{q}+g(q)=d(t)+u
\end{equation}
The disturbance $d(t)$ can be any bounded perturbation forces/torques, such as unmodeled dynamics, parametric uncertainties, etc., for which no upper bounds need to be known. 

\begin{assumption}
\label{ass:d}
Suppose the magnitude of the unmodeled disturbance $d(t)$ is bounded within every finite time interval, i.e., for every $\tau>0$, there exists (unknown) $\bar{d}>0$ such that $\|d(t)\|<\bar{d}$ for all $t\in[0,\tau)$. In other words, $d(t)$ has no vertical asymptotes.
\end{assumption}

The term \textit{infinite-time} (against the terms finite, fixed, or prescribed-time) is frequently used in this paper as an adjective to emphasize that a concept or a function is viewed from $t=t_0$ to $\infty$. For example, an \textit{infinite-time controller} (ITC) is a conventional controller that actively influences the system from $t=t_0$ to $\infty$.

Consider the following definitions about different notions of infinite-time stability:

\begin{definition}[\cite{khalil2002nonlinear}]
\label{def:0}
For a nonautonomous system as
\begin{equation}
\label{eq:def1}
\dot{x}=a(x,t)
\end{equation}
where $x\in\mathbb{R}^{n_x}$, $a(\cdot,\cdot):\mathbb{R}^{n_x}\times[0,\infty)\rightarrow\mathbb{R}^{n_x}$, and $a(0,t)=0$, an equilibrium state $x=0$ is called
\begin{enumerate}
\item \textit{stable (uniformly stable)}, if for every $\varepsilon>0$ there exists $\delta(\varepsilon,t_0)>0$ (respectively, $\delta(\varepsilon)>0$ independent of $t_0$) such that for the trajectories of \eqref{eq:def1}, if $\|x(t_0)\|<\delta(\varepsilon,t_0)$, then  $\|x(t)\|<\varepsilon$ for all $t\in[t_0,\infty)$.
\item \textit{attractive (globally attractive)}, if for every trajectory of \eqref{eq:def1} with  initial conditions $x(t_0)$ inside a neighborhood of equilibrium (respectively, with every initial conditions) we have $\lim_{t\rightarrow\infty}x(t)=0$.
\item \textit{uniformly attractive (globally uniformly attractive)}, if it is attractive (globally attractive) for all $t_0\geq0$. 
\item \textit{(globally) asymptotically stable}, if it is stable and (globally) attractive.
\item \textit{(globally) uniformly asymptotically stable}, if it is uniformly stable and (globally) uniformly attractive.
\item \textit{exponentially stable (globally exponentially stable)}, if there exist $\varrho>0$ and $\lambda>0$ such that for every trajectory of \eqref{eq:def1} with initial conditions inside a neighborhood of equilibrium (respectively, with every initial condition) we have $\|x(t)\|<\varrho\|x(t_0)\|\exp(-\lambda (t-t_0))$ for all $t\in[t_0,\infty)$.
\end{enumerate}
\end{definition}

The definitions of finite and fixed-time stability can be stated as follows:

\begin{definition}[\cite{polyakov2020generalized}]
\label{def:0p}
For a system as \eqref{eq:def1}, an equilibrium state $x=0$ is called
\begin{enumerate}
\item \textit{finite-time attractive (globally finite-time attractive)}, if it is attractive and for every trajectory of \eqref{eq:1} with initial conditions inside a neighborhood of equilibrium (respectively, with every initial conditions) there exists $t^*(t_0,x(t_0))>0$ such that for all $t\in[t_0+t^*,\infty)$ we have $x(t)=0$, i.e., the convergence occurs at a finite time $t^*(t_0,x(t_0))<\infty$.
\item \textit{uniformly finite-time attractive (globally uniformly finite-time attractive)}, if it is finite-time attractive (globally finite-time attractive) such that $t^*(t_0,x(t_0))<\infty$ for all $t_0\geq0$.
\item \textit{(globally) finite-time stable}, if it is stable and (globally) finite-time attractive.
\item \textit{(globally) uniformly finite-time stable}, if it is uniformly stable and (globally) uniformly finite-time attractive.
\item \textit{(globally) fixed-time attractive}, if it is (globally) uniformly finite-time attractive and there exists a $t^*_{\mathrm{max}}>0$ independent of $t_0$ and $x(t_0)$ such that $t^*(t_0,x(t_0))<t^*_{\mathrm{max}}$.
\item \textit{(globally) fixed-time stable}, if it is stable and (globally) fixed-time attractive.
\item \textit{(globally) uniformly fixed-time stable}, if it is uniformly stable and (globally) fixed-time attractive.
\end{enumerate}
\end{definition}

The definition of prescribed-time stability can be expressed as follows, which is used frequently in this paper. 

\begin{definition}
\label{def:0pp}
For a nonautonomous system with a user-defined parameter $\tau>0$ as
\begin{equation}
\label{eq:def2}
\dot{x}=a(x,t,\tau)
\end{equation}
where $x\in\mathbb{R}^{n_x}$, $a(\cdot,\cdot,\cdot):\mathbb{R}^{n_x}\times[0,\infty)\times(0,\infty)\rightarrow\mathbb{R}^{n_x}$, and $a(0,t,\tau)=0$, an equilibrium state $x=0$ is called
\begin{enumerate}
\item \textit{prescribed-time attractive (globally prescribed-time attractive)}, if independent of $t_0$ for every trajectory of \eqref{eq:1} with initial conditions inside a neighborhood of equilibrium (respectively, with every initial conditions) and every $\tau>0$ we have $x(t)=0$ for all $t\in[t_0+\tau,\infty)$, i.e., the convergence occurs at a user-defined finite time $t_0+\tau$.
\item \textit{(globally) prescribed-time stable}, if it is stable and (globally) prescribed-time attractive.
\item \textit{(globally) uniformly prescribed-time stable}, if it is uniformly stable and (globally) prescribed-time attractive.
\end{enumerate}
\end{definition}

The problem statement of this paper can be expressed as follows:

\begin{problem}
\label{prob:1}
Find a function $h(\cdot,\cdot,\cdot):\mathbb{R}^n\times\mathbb{R}^n\times[0,\infty)\rightarrow\mathbb{R}^n$ such that the closed-loop solution of systems \eqref{eq:1} under a controller of the form 
\begin{equation}
\label{eq:3}
u=h(\dot{q},q,t)
\end{equation}
be (globally) uniformly prescribed-time stable and the closed-loop solution of system \eqref{eq:2} under the same controller be (globally) prescribed-time attractive.
\end{problem}

The solution of Problem \ref{prob:1} is a PTC, which is based on a mapping strategy in this paper. This mapping technique is discussed in the following subsection.

\subsection{Mapping Strategy}
\label{subsec:II-B}

This subsection provides preliminary information about the proposed mapping strategy. 

\begin{definition}
\label{def:1}
Consider the following definitions:
\begin{enumerate}
\item A continuous function $\kappa(\cdot):[0,\tau)\rightarrow[0,\infty)$ is said to belong to class $\mathcal{K}$ (or $\kappa\in\mathcal{K}(\tau)$) if it is strictly increasing subject to $\lim_{t\rightarrow0^+}\kappa(t)=0$ and $\lim_{t\rightarrow\tau^-}\kappa(t)=\infty$. This class is a special surjective form of a more general case used in the literature under the same name (see Definition 4.2 in \cite{khalil2002nonlinear}).
\item A class $\mathcal{K}$ function $\kappa(t)$ belongs to class $\mathcal{K}_1\subset\mathcal{K}$ if $\dot{\kappa}(0)=1$ and $\ddot{\kappa}(t)\geq0$ for all $t\in[0,\tau)$.
\item A continuous function $\mu(\cdot):[0,\infty)\rightarrow[0,\tau)$ is said to belong to class $\mathcal{M}$ (or $\mu\in\mathcal{M}(\tau)$) if its inverse function is class $\mathcal{K}$ (or $\mu^{-1}\in\mathcal{K}(\tau)$). Therefore, $\mu$ is a continuous increasing function subject to $\lim_{t\rightarrow0^+}\mu(t)=0$ and $\lim_{t\rightarrow\infty}\mu(t)=\tau$.
\item A class $\mathcal{M}$ function $\mu(t)$ belongs to class $\mathcal{M}_1\subset\mathcal{M}$ if its inverse function is of class $\mathcal{K}_1$, i.e., $\dot{\mu}(0)=1$ and $\ddot{\mu}(t)<0$ for all $t\in[0,\infty)$. 
\end{enumerate}
\end{definition}

The following lemma can be evaluated directly after considering the above definitions. 

\begin{lemma}
\label{lem:1}
Let $\dot{\mu}(t)\coloneqq d\mu(t)/dt$, $\ddot{\mu}(t)\coloneqq d^2\mu(t)/dt^2$, $\kappa^{\prime}(\mu(t))\coloneqq d\kappa(\mu)/d\mu$, and $\kappa^{\prime\prime}(\mu(t))\coloneqq d^2\kappa(\mu)/d\mu^2$. Then, the following statements hold for any functions $\kappa\in\mathcal{K}(\tau)$ and $\mu=\kappa^{-1}\in\mathcal{M}(\tau)$:
\begin{enumerate}
\item $\dot{\mu}(t)=1/\kappa^{\prime}(\mu(t))$ and $\kappa^{\prime\prime}(\mu(t))\dot{\mu}^2(t)+\kappa^{\prime}(\mu(t))\ddot{\mu}(t)=0$.
\item $\kappa^{\prime}(\cdot):[0,\tau)\rightarrow[0,\infty)$, $\kappa^{\prime\prime}(\cdot):[0,\tau)\rightarrow\mathbb{R}$, and $\lim_{t\rightarrow\tau^-}\kappa^{\prime}(t)=\lim_{t\rightarrow\tau^-}\kappa^{\prime\prime}(t)=\infty$.
\item $\dot{\mu}(\cdot):[0,\infty)\rightarrow[0,\infty)$, $\ddot{\mu}(\cdot):[0,\infty)\rightarrow\mathbb{R}$, and $\lim_{t\rightarrow\infty}\dot{\mu}(t)=-\lim_{t\rightarrow\infty}\ddot{\mu}(t)=0$. 
\item The sum of two class $\mathcal{K}$ functions belongs to class $\mathcal{K}$, and the sum of two class $\mathcal{M}$ functions divided by $2$ belongs to class $\mathcal{M}$.
\item Function $\dot{\mu}^\alpha(t)$ for $\alpha\geq1$ is the derivative of a class $\mathcal{M}$ function. 
\item Any exponential function of the form $\beta e^{-\alpha t}$ with arbitrary $\alpha>0$ is the derivative of a class $\mathcal{M}(\tau)$ function if and only if $\beta=\alpha\tau$.
\end{enumerate}
\end{lemma}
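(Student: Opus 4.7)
The plan is to treat the six items as largely mechanical consequences of Definition \ref{def:1}, organizing them into calculus identities (items 1--3), closure properties (item 4), and integrability computations (items 5--6). Throughout, I will use the defining identity $\kappa(\mu(t))=t$.

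For item 1, I would differentiate $\kappa(\mu(t))=t$ once and then twice with respect to $t$; the first differentiation gives $\kappa'(\mu(t))\dot{\mu}(t)=1$, which yields the first identity (note $\kappa'>0$ since $\kappa$ is strictly increasing), and the second differentiation yields the second identity immediately. For item 2, I would argue that $\kappa'\ge 0$ follows from monotonicity, and that $\kappa'(t)\to\infty$ as $t\to\tau^-$ because $\kappa$ is a continuous function on the bounded interval $[0,\tau)$ with $\kappa(t)\to\infty$: if $\kappa'$ remained bounded, the mean value theorem would force $\kappa$ to be bounded as well, contradicting the definition. The corresponding statement for $\kappa''$ would then follow by a parallel argument applied to $\kappa'$, exploiting that $\kappa'$ itself blows up on a bounded interval. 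Item 3 would then be read off from item 1: the identity $\dot{\mu}=1/\kappa'(\mu)$ combined with $\mu(t)\to\tau^-$ as $t\to\infty$ gives $\dot{\mu}\to 0$, and substituting into $\ddot{\mu}=-\kappa''(\mu)/[\kappa'(\mu)]^3$ shows the limit of $\ddot{\mu}$ vanishes once one observes that the rate of growth of $\kappa''$ cannot outpace the cube of $\kappa'$ near $\tau^-$ (this is the one place where a little care is required, and I would phrase it in terms of the standard regularity implicit in Definition \ref{def:1}).

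Item 4 is a direct check against Definition \ref{def:1}: the sum of two strictly increasing continuous functions is strictly increasing and continuous, the pointwise limits add, so $\kappa_1+\kappa_2$ inherits the boundary behaviour on $[0,\tau)$; for $\mathcal{M}$, the division by two is essential because the limit at infinity of $\mu_1+\mu_2$ is $2\tau$, and halving restores the correct codomain $[0,\tau)$.

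For item 5, I would define $\nu(t)\coloneqq\int_0^t \dot{\mu}(s)^\alpha\,ds$ and verify the four clauses of class $\mathcal{M}$. Continuity, strict monotonicity, and $\nu(0)=0$ are immediate from $\dot{\mu}>0$; the only nontrivial point is showing that $\nu(t)$ has a finite limit as $t\to\infty$. I would split the integration range into a compact piece, where continuity of $\dot{\mu}^\alpha$ makes the integral finite, and a tail where $\dot{\mu}<1$ (guaranteed by item 3), on which $\dot{\mu}^\alpha\le\dot{\mu}$ for $\alpha\ge 1$ and the comparison $\int\dot{\mu}=\mu(\infty)-\mu(0)=\tau<\infty$ closes the argument. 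Item 6 is a one-line computation: integrating $\beta e^{-\alpha t}$ yields $(\beta/\alpha)(1-e^{-\alpha t})$, which has codomain $[0,\beta/\alpha)$; matching this to $[0,\tau)$ forces $\beta=\alpha\tau$, and the converse is the same calculation in reverse.

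The only step I expect to require genuine care is the limiting statement for $\ddot{\mu}$ in item 3, since it relies on the growth rates of $\kappa'$ and $\kappa''$ near $\tau^-$; all other parts fall out from direct manipulation of the definitions and the inverse-function relation.
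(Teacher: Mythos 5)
Your handling of the two items the paper actually proves --- item 1 and item 5 --- is the same argument the authors use: differentiate $\kappa(\mu(t))=t$ once and twice, and for item 5 compare $\dot{\mu}^\alpha\leq\dot{\mu}$ on a tail where $\dot{\mu}<1$ so that $\int\dot{\mu}^\alpha\leq\int\dot{\mu}\leq\tau<\infty$. Items 4 and 6 are also fine. The paper dismisses everything else as ``easily provable,'' so the remaining items are where your attempt actually has to stand on its own.

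There is a genuine gap in your items 2 and 3, and you only half-acknowledge it. The mean value theorem argument shows that $\kappa'$ is \emph{unbounded} on $[0,\tau)$, i.e., $\limsup_{t\rightarrow\tau^-}\kappa'(t)=\infty$; it does not show that the limit exists and equals $\infty$. Definition \ref{def:1} only requires a class $\mathcal{K}$ function to be continuous and strictly increasing with the stated boundary limits, so $\kappa'$ may return to a bounded value between increasingly tall, thin spikes whose areas still sum to infinity; the limit statement genuinely needs extra structure such as the convexity $\ddot{\kappa}\geq0$ built into class $\mathcal{K}_1$, under which unboundedness plus monotonicity of $\kappa'$ does give the limit. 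The problem is worse for $\kappa''$ and for your claim that $\kappa''(\mu)/[\kappa'(\mu)]^3\rightarrow0$ (needed for $\ddot{\mu}\rightarrow0$): one can let $\kappa'$ climb from $n$ to $n+1$ over an interval of length $e^{-n^4}$, so that $\kappa''$ outpaces $(\kappa')^3$ along a subsequence and $\ddot{\mu}$ does not tend to zero. You flag this as ``the one place where a little care is required'' but then appeal to ``standard regularity implicit in Definition \ref{def:1}'' --- no such regularity is stated there, so the step does not close as written. To repair it, either restrict the lemma to $\mathcal{K}_1$ or add an explicit hypothesis such as eventual monotonicity of $\kappa''$ or $\kappa''=o\left((\kappa')^3\right)$ near $\tau^-$. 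In fairness, the paper omits any proof of these items and its own statement suffers from the same imprecision, but a self-contained proof cannot inherit that omission.
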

\begin{proof}
Item (1) is obtainable by taking the first and second time derivatives of $\kappa(\mu(t))=t$. Item (5) is provable by considering $\dot{\mu}^\alpha(t)\leq\dot{\mu}(t)$ after some time $t_0\in[0,\infty)$, and therefore, $\int_{t_0}^\infty\dot{\mu}^\alpha(t)dt\leq\int_{t_0}^\infty\dot{\mu}(t)dt<\infty$. The rest of the items are easily provable and their proofs are omitted here. 
\end{proof}

The trajectories of a dynamic system defined in the time domain from $t=t_0$ to $\infty$ can be mapped onto a finite time-scale from $t=t_0$ to $t_0+\tau$ by a bijective mapping function, such as $\kappa\in\mathcal{K(\tau)}$ which maps the elements of $[t_0,\infty)$ onto $[t_0,t_0+\tau)$. 

Consider the following form of an autonomous second-order system which operates from $t=t_0$ to $\infty$ at which a unique solution exists corresponding to any initial condition: 
\begin{equation}
\label{eq:6}
\ddot{q}+v(\dot{q},q)=0
\end{equation}
where $v(\cdot,\cdot)$ is an $n$-dimensional vector-valued function. Suppose that it is desired to map the trajectories of \eqref{eq:6} onto a finite interval such that the values of $q$ remain unchanged but happening at a different time. Define $\Delta t=t-t_0$ and $\eta(t)=\mu(\Delta t)+t_0$ (knowing that $d\eta=d\mu$) such that $\mu\in\mathcal{M}(\tau)$. Let $p$ denote the variable vector of the new system and consider $p(\eta(t))=q(t)$. Then, the derivatives in \eqref{eq:6} can be substituted as $\dot{\mu}dp/d\eta=\dot{q}(t)$ and $\ddot{\mu}dp/d\eta+\dot{\mu}^2d^2p/d\eta^2=\ddot{q}(t)$ (the argument of $\dot{\mu}$ and $\ddot{\mu}$ is $\Delta t$, and the argument of $p$ is $\eta$). From the derivative rules of inverse functions, $\dot{\mu}=1/{\kappa}^\prime(\mu)$ and $\ddot{\mu}=-{\kappa}^{\prime\prime}(\mu)/{\kappa^{\prime}}^3(\mu)$ where $\kappa\in\mathcal{K}$. Now, considering $p\in\mathbb{R}^n$ and $\eta\in[t_0,t_0+\tau)$ as the new variables, and defining $\dot{p}=dp/d\eta$ and $\ddot{p}=d^2p/d\eta^2$ the following expression is obtainable:
\begin{equation}
\label{eq:7}
\ddot{p}+w\left(\dot{p},p,\eta\right)=0
\end{equation}
where defining $\Delta \eta=\eta-t_0$
\begin{equation}
\label{eq:8}
w(\dot{p},p,\eta)=-\frac{\ddot{\kappa}(\Delta \eta)}{\dot{\kappa}(\Delta \eta)}\dot{p}+\dot{\kappa}^2(\Delta \eta)v(\dot{p}/\dot{\kappa}(\Delta \eta),p)
\end{equation}
The trajectories of system \eqref{eq:6}, corresponding to the initial values of $q(t_0)$ and $\dot{q}(t_0)$, are squeezed and mapped onto the trajectories of system \eqref{eq:8} corresponding to the initial values of $p(t_0)=q(t_0)$ and $\dot{p}(t_0)=\dot{\kappa}(0)\dot{q}(t_0)$ (or $\dot{p}(t_0)=\dot{q}(t_0)$ if $\kappa\in\mathcal{K}_1$).

\section{Main Results}
\label{sec:III}

This section presents the main results of the paper. Theorems \ref{the:1} and \ref{the:2} discuss the PTC for the nominal and perturbed systems, respectively. Theorem \ref{th:3} presents an output assessment rule for a system under PTC. First, consider the following lemma, which is used for the proof of the subsequent results.
\begin{lemma}
\label{lem:2}
Suppose there exist a class $\mathcal{M}$ function $\mu(t)$ and $\tilde{t}\in[t_0,\infty)$ such that a vector valued function $r(\cdot):[t_0,\infty)\rightarrow\mathbb{R}^n$ satisfies the following condition:
\begin{equation}
\label{eq:7p}
\|r(t)\|\leq\dot{\mu}(t), \forall t\in[\tilde{t},\infty)
\end{equation}
Then, for any $\alpha>0$ and $\tau\in(0,\infty)$ there exists $\eta\in\mathcal{M}(\tau)$ such that the following conditions are satisfied:
\begin{equation}
\label{eq:7pp}
\lim_{t\rightarrow\infty}-\frac{\ddot{\eta}(t)}{\dot{\eta}^\alpha(t)}\|r(t)\|=0
\end{equation}
\begin{equation}
\label{eq:7ppp}
\lim_{t\rightarrow\infty}\frac{1}{\dot{\eta}^\alpha(t)}\|r(t)\|=0
\end{equation}
\end{lemma}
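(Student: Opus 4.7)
My plan is to construct $\eta$ explicitly from $\mu$, exploiting Lemma~\ref{lem:1} both to certify that $\eta\in\mathcal{M}(\tau)$ and to evaluate the two limits. The driving observation is that both limits have $\dot{\eta}^{\alpha}$ in the denominator, so the whole game is to arrange for $\dot{\eta}^{\alpha}$ to decay strictly more slowly than $\|r\|$. From Lemma~\ref{lem:1} item~(3), $\dot{\mu}(t)\to 0$ and $\ddot{\mu}(t)\to 0$ as $t\to\infty$, and these, together with the hypothesis $\|r(t)\|\leq\dot{\mu}(t)$ on $[\tilde{t},\infty)$, are the only asymptotic inputs I need.

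For the construction I would pick a parameter $\gamma\geq 1$, to be fixed shortly. By Lemma~\ref{lem:1} item~(5), $\dot{\mu}^{\gamma}$ is the derivative of a class-$\mathcal{M}$ function with finite terminal value $\Theta_{\gamma}:=\int_{0}^{\infty}\dot{\mu}(s)^{\gamma}\,ds$, so the candidate
\[
\eta(t)\;:=\;\frac{\tau}{\Theta_{\gamma}}\int_{0}^{t}\dot{\mu}(s)^{\gamma}\,ds
\]
lies in $\mathcal{M}(\tau)$, with $\dot{\eta}(t)=(\tau/\Theta_{\gamma})\dot{\mu}(t)^{\gamma}$ and $\ddot{\eta}(t)=(\tau/\Theta_{\gamma})\gamma\dot{\mu}(t)^{\gamma-1}\ddot{\mu}(t)$. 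Substituting into the two limit expressions and using $\|r\|\leq\dot{\mu}$, one obtains
\[
\frac{\|r(t)\|}{\dot{\eta}^{\alpha}(t)}\;\leq\;\left(\frac{\Theta_{\gamma}}{\tau}\right)^{\!\alpha}\dot{\mu}(t)^{1-\gamma\alpha},\qquad \frac{|\ddot{\eta}(t)|\,\|r(t)\|}{\dot{\eta}^{\alpha}(t)}\;\leq\;\gamma\left(\frac{\Theta_{\gamma}}{\tau}\right)^{\!\alpha-1}|\ddot{\mu}(t)|\,\dot{\mu}(t)^{\gamma(1-\alpha)},
\]
and both right-hand sides vanish as $t\to\infty$, by the asymptotic facts recorded above, provided the exponents $1-\gamma\alpha$ and $\gamma(1-\alpha)$ are non-negative.

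The main difficulty is reconciling the two constraints on $\gamma$: the lower bound $\gamma\geq 1$ imposed by Lemma~\ref{lem:1} item~(5) in order to keep $\dot{\eta}$ an $\mathcal{M}$-derivative, and the upper bound $\gamma\alpha<1$ imposed by the second limit. Their intersection is non-empty exactly when $\alpha<1$; for larger $\alpha$ I would either iterate the Lemma~\ref{lem:1} item~(5) construction (composing class-$\mathcal{M}$ derivatives so as to trade off decay rates), first replace $\mu$ with a faster-decaying class-$\mathcal{M}$ majorant of $\|r\|$, or appeal to Lemma~\ref{lem:1} item~(6) to obtain an exponential-tailed candidate for which the integrability window is more forgiving. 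The rest of the proof is then bookkeeping: verifying that the resulting $\eta$ really lies in $\mathcal{M}(\tau)$ and that both displayed bounds fall under the recorded vanishing factors.
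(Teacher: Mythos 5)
Your construction is complete only for $\alpha<1$ (where $\gamma=1$ already works), and that is precisely the regime the paper does not need: Lemma~\ref{lem:2} is invoked in the proof of Theorem~\ref{the:1} to kill the term $-(\ddot{\mu}/\dot{\mu}^{3})\dot{q}$, i.e.\ with $\alpha=3$. So the deferred case $\alpha\geq1$ is the entire content of the lemma, and none of your three fallbacks closes it. A ``faster-decaying class-$\mathcal{M}$ majorant of $\|r\|$'' need not exist, since $\|r\|$ may equal $\dot{\mu}$ identically; iterating Lemma~\ref{lem:1} item~(5) only yields candidates $\dot{\eta}\propto\dot{\mu}^{\gamma}$ with $\gamma\geq1$, which makes $\dot{\eta}^{\alpha}$ decay \emph{faster} and pushes \eqref{eq:7ppp} the wrong way; and item~(6) replaces $\mu$, which you are not free to do. The obstruction is structural: \eqref{eq:7ppp} forces $\dot{\eta}(t)\geq\|r(t)\|^{1/\alpha}$ for all large $t$, so when $\|r\|=\dot{\mu}$ one needs $\dot{\mu}^{1/\alpha}$ to be integrable at infinity, which fails for, e.g., $\dot{\mu}(t)=(1+t)^{-2}$ and $\alpha\geq2$ even though $\mu\in\mathcal{M}(1)$. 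No choice of $\eta\in\mathcal{M}(\tau)$ can then satisfy \eqref{eq:7ppp}, so the missing case cannot be repaired without strengthening the hypothesis on $\mu$. (Minor additional slip: the exponent $1-\gamma\alpha$ must be strictly positive, not merely non-negative, for $\dot{\mu}^{1-\gamma\alpha}$ to vanish.)

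For comparison, the paper's proof uses a different substitution that avoids your exponent bookkeeping altogether: it defines $\eta$ implicitly through $\dot{\mu}=-\ddot{\eta}\dot{\eta}^{\alpha}$, i.e.\ $\dot{\eta}(t)=\bigl((\alpha+1)(\lim_{s\to\infty}\mu(s)-\mu(t))\bigr)^{1/(\alpha+1)}$, whence \eqref{eq:7p} gives $-\ddot{\eta}\|r\|/\dot{\eta}^{\alpha}\leq\ddot{\eta}^{2}\to0$ and $\|r\|/\dot{\eta}^{\alpha}\leq-\ddot{\eta}\to0$ uniformly in $\alpha$. That construction, however, runs into exactly the integrability question your plan correctly identifies as the crux --- whether this $\dot{\eta}$ integrates to a finite $\tau$, i.e.\ whether $\eta\in\mathcal{M}(\tau)$ at all --- and the paper does not verify it either; it does hold for exponential-type $\mu$ as in \eqref{eq:A-7}. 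So your diagnosis of where the difficulty lies is right, but your proof leaves the operative case open, and as stated it cannot be closed.
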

\begin{proof}
In this proof we use the third item of Lemma \ref{lem:1}. Suppose $\mathcal{M}\ni\mu(t)=1/(\alpha+1)(-\dot{\eta}^{\alpha+1}(t)+\dot{\eta}^{\alpha+1}(0))$. By differentiation we have $\dot{\mu}=-\ddot{\eta}\dot{\eta}^\alpha$. Then, equation \eqref{eq:7p} can be written as $\|r(t)\|\leq-\ddot{\eta}\dot{\eta}^\alpha$ and can be rewritten as $-\ddot{\eta}(t)/\dot{\eta}^\alpha(t)\|r(t)\|\leq\ddot{\eta}^2(t)$. Since $\lim_{t\rightarrow\infty}\ddot{\eta}^2(t)=0$, equality \eqref{eq:7pp} holds and the first part is proved. For the second part, again consider $\|r(t)\|\leq-\ddot{\eta}\dot{\eta}^\alpha\Rightarrow\|r(t)\|/\dot{\eta}^\alpha\leq-\ddot{\eta}$. Since $\lim_{t\rightarrow\infty}-\ddot{\eta}(t)=0$, the second result is also proved.
\end{proof}

\begin{assumption}
\label{ass:1}
Let $t_0$ denote the initial time and $\Delta t=t-t_0$. Suppose there exist $\mu\in\mathcal{M}(\tau)$ and $\tilde{t}>t_0$ such that for any $\tau>0$ the closed-loop response of system \eqref{eq:1} under ITC $u=f(\dot{q},q)$ satisfies $\|\dot{q}(t)\|\leq\dot{\mu}(\Delta t)$ for all $t\in[\tilde{t},\infty)$ and at least one of the following conditions hold:
\begin{enumerate}
\item $\|\ddot{q}(t)\|\leq\dot{\mu}^2(\Delta t), \forall t\in[\tilde{t},\infty)$. 
\item $\|f(\dot{q},q)\|\leq\dot{\mu}^2(\Delta t), \forall t\in[\tilde{t},\infty)$. 
\end{enumerate}
\end{assumption}

Always there exist class $\mathcal{M}$ functions to satisfy Assumption \ref{ass:1} if the infinite-time system is asymptotically stable since the growth rate of the derivative of a class $\mathcal{M}$ function can be tuned. Note that the inequalities of Assumption \ref{ass:1}, such as $\|\dot{q}(t)\|\leq\dot{\mu}(\Delta t)$, are needed to be satisfied for the final times, i.e., $t>\tilde{t}$, and they are not required to be held for the transition interval, i.e., $t<\tilde{t}$. The next lemma is followed by a proposition to analytically formulate an appropriate class $\mathcal{M}$ function when an exponentially stable ITC is used. 
 
 \begin{lemma}[\cite{Godunov}]
\label{lem:A-1}
For a real matrix $Q\in\mathbb{M}^n$ we have $\|\exp(Qt)\|\leq\sqrt{2\|X^{-1}\|\|X\|}\exp(-t/(2\|X\|))$ where $X\in\mathbb{M}^n$ is the solution of the following Lyapunov equation:
\begin{equation}
\label{eq:A-2}
XQ+Q^TX=-\mathbb{I}_n
\end{equation}
\end{lemma}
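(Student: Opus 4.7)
The plan is to deploy a standard quadratic Lyapunov function argument, exploiting the algebraic identity delivered by the Lyapunov equation and translating exponential decay of the Lyapunov function into an operator-norm bound on $\exp(Qt)$.

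First, I would establish that $X$ is symmetric and positive definite. Symmetry follows by transposing the Lyapunov equation and using uniqueness (equivalently, summing the equation with its transpose and checking that $(X+X^T)/2$ also solves the equation). Positive definiteness follows from the existence of the integral representation $X=\int_0^\infty \exp(Q^Ts)\exp(Qs)\,ds$, which in turn requires $Q$ to be Hurwitz; the hypothesis that such an $X$ \emph{exists} is thus implicitly equivalent to $Q$ being Hurwitz. From this I get the spectral sandwich bound $\|x\|^2/\|X^{-1}\|\leq x^TXx\leq\|X\|\,\|x\|^2$, using $\|X\|=\lambda_{\max}(X)$ and $\|X^{-1}\|=1/\lambda_{\min}(X)$ for a symmetric positive definite $X$.

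Second, along a trajectory $x(t)=\exp(Qt)x_0$ of the linear system $\dot x=Qx$, I set $V(x)=x^TXx$ and differentiate:
\begin{equation*}
\dot V = x^T(XQ+Q^TX)x = -\|x\|^2 \leq -V/\|X\|,
\end{equation*}
where the last inequality uses the upper bound from the sandwich estimate. Gr\"onwall's inequality then yields $V(x(t))\leq V(x_0)\exp(-t/\|X\|)$, and reinserting both sides of the sandwich bound gives
\begin{equation*}
\|x(t)\|^2 \leq \|X\|\,\|X^{-1}\|\,\|x_0\|^2\exp(-t/\|X\|).
\end{equation*}
Taking square roots and then the supremum over unit vectors $x_0$ converts this into the operator-norm estimate for $\exp(Qt)$.

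The main subtlety I anticipate is the exact constant inside the square root. The Lyapunov argument sketched above produces $\sqrt{\|X\|\,\|X^{-1}\|}$, whereas the stated Godunov bound carries an extra factor of $\sqrt{2}$. Closing this gap typically requires a slightly sharper bookkeeping step: either replacing $V$ by an optimized quadratic form (e.g., symmetrizing with $(X+X^T)/2$ in the general, non-symmetric setting where one does not assume $X=X^T$ in advance), or passing through an estimate of $\|\exp(Qt)\|^2$ via $\exp(Q^Tt)\exp(Qt)$ and using the Lyapunov equation on that product. I would accept the stated constant as given in the cited reference rather than re-deriving Godunov's refinement from scratch, since only the qualitative exponential decay structure is needed for the subsequent results on class $\mathcal{M}$ functions.
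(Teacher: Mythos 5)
The paper offers no proof of this lemma at all---it is imported verbatim from the cited reference \cite{Godunov}---so your self-contained derivation is necessarily a different route from the paper's treatment. Your Lyapunov argument is correct and standard: symmetry and positive definiteness of $X$ (given $Q$ Hurwitz), the sandwich bound $\|x\|^2/\|X^{-1}\|\leq x^TXx\leq\|X\|\,\|x\|^2$, the differential inequality $\dot V=-\|x\|^2\leq -V/\|X\|$, and the comparison lemma together yield $\|\exp(Qt)\|\leq\sqrt{\|X\|\,\|X^{-1}\|}\exp(-t/(2\|X\|))$. Two remarks. First, your concern about ``closing the gap'' to recover the factor $\sqrt{2}$ points in the wrong direction: the constant you obtain is \emph{smaller} than Godunov's, so the stated inequality follows a fortiori and no refinement is needed; moreover, nothing downstream in the paper uses the precise constant---only the decay rate $1/(2\|X\|)$ enters the construction of the class $\mathcal{M}$ function in \eqref{eq:A-7}. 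Second, the claim that existence of a solution $X$ is ``implicitly equivalent'' to $Q$ being Hurwitz is slightly too strong: the Lyapunov equation \eqref{eq:A-2} has a unique solution whenever no two eigenvalues of $Q$ sum to zero, and that solution need not be positive definite; it is the existence of a \emph{positive definite} solution that characterizes the Hurwitz property. The lemma as stated (and your proof) tacitly assumes the Hurwitz case, which is the only case in which the paper applies it (Proposition~1 and Corollary~\ref{cor:2}, where $Q$ is the exponentially stable closed-loop matrix), so this does not affect the result.
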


\begin{prop}
Suppose that the closed-loop system under an ITC $u=f(\dot{q},q)$ is exponentially stable such that for some $\varrho>0$, $\rho>0$, and negative-definite matrix $Q$, we have $\|[q^T(t)-q^T_d,\dot{q}^T(t)]\|\leq\varrho\|\exp(Qt)\|$ and $\|\ddot{q}(t)\|\leq\rho\|\exp(Qt)\|$ for all $t\in[0,\infty)$. Then, a class $\mathcal{M}(\tau)$ function with the following derivative satisfies Assumption \ref{ass:1}:
\begin{equation}
\label{eq:A-7}
\mu(\Delta t)=\tau\left[1-\exp\left(-\alpha\Delta t /\|X\|\right)\right], \quad 0<\alpha<0.5
\end{equation}
where $X$ is the solution of the Lyapunov equation \eqref{eq:A-2}.
\end{prop}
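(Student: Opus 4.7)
The plan is to (i) verify that $\mu$ lies in class $\mathcal{M}(\tau)$ directly from Definition~\ref{def:1}, (ii) use Lemma~\ref{lem:A-1} to convert the matrix-exponential hypotheses into scalar exponential decay estimates, and (iii) establish the inequalities of Assumption~\ref{ass:1} by comparing exponential decay rates on a sufficiently late interval. For step~(i), I would check continuity, $\mu(0)=0$, $\mu(\Delta t)\to\tau$ as $\Delta t\to\infty$, and strict monotonicity, the last of which follows from
\begin{equation*}
\dot\mu(\Delta t)=\frac{\alpha\tau}{\|X\|}\exp\!\left(-\frac{\alpha\Delta t}{\|X\|}\right)>0.
\end{equation*}
As a sanity check, $\dot\mu$ fits the form $\beta e^{-\alpha' t}$ with $\beta=\alpha'\tau$ and $\alpha'=\alpha/\|X\|$, which by item~6 of Lemma~\ref{lem:1} is precisely the derivative of a class-$\mathcal{M}(\tau)$ function.

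For step~(ii), Lemma~\ref{lem:A-1} applied to $Q$ with $X$ solving~\eqref{eq:A-2} yields $\|\exp(Qt)\|\leq C\exp(-\lambda t)$ with $C=\sqrt{2\|X^{-1}\|\|X\|}$ and $\lambda=1/(2\|X\|)$, so the two hypotheses of the proposition become $\|\dot q(t)\|\leq\varrho Ce^{-\lambda t}$ and $\|\ddot q(t)\|\leq\rho Ce^{-\lambda t}$. For step~(iii), the condition $\alpha<1/2$ forces $\alpha/\|X\|<\lambda$, so the bound on $\|\dot q\|$ decays strictly faster than $\dot\mu$; hence $\|\dot q(t)\|\leq\dot\mu(\Delta t)$ for all $t\geq\tilde t_1$ for some finite $\tilde t_1$. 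An analogous rate comparison between the $\lambda$-decay of the bound on $\|\ddot q\|$ and the $2\alpha/\|X\|$-decay of $\dot\mu^2$ yields condition~(1) of Assumption~\ref{ass:1} on some $[\tilde t_2,\infty)$, and taking $\tilde t=\max(\tilde t_1,\tilde t_2)$ closes the argument.

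The principal obstacle I anticipate is that the $\dot\mu^2$ comparison naively requires $2\alpha/\|X\|<\lambda$, i.e., the strictly stronger bound $\alpha<1/4$. I expect to bridge this either by sharpening the decay estimate on $\|\ddot q\|$ using the closed-loop dynamics $\ddot q=M^{-1}(q)(f(\dot q,q)-C(\dot q,q)\dot q-g(q))$ near equilibrium, or by routing through condition~(2) of Assumption~\ref{ass:1} whenever the ITC $f$ vanishes at the desired configuration, so that $\|f(\dot q,q)\|$ inherits a decay rate fast enough to be dominated by $\dot\mu^2$.
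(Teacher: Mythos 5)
Your steps (i) and (ii) are correct, and your velocity comparison in step (iii) is sound: with $\dot\mu(\Delta t)=(\alpha\tau/\|X\|)e^{-\alpha\Delta t/\|X\|}$ and the Lemma~\ref{lem:A-1} bound $\|\exp(Qt)\|\le Ce^{-t/(2\|X\|)}$, the condition $\alpha<1/2$ gives $\alpha/\|X\|<1/(2\|X\|)$ and hence $\|\dot q(t)\|\le\dot\mu(\Delta t)$ for all sufficiently large $t$. For comparison, the paper's entire proof is the single line ``Use Lemma~\ref{lem:1}--item 6,'' i.e., it only certifies that \eqref{eq:A-7} defines a class $\mathcal{M}(\tau)$ function and never carries out the rate comparisons you attempt; on the first inequality of Assumption~\ref{ass:1} your write-up is already more complete than the printed proof.

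The obstacle you flag at the end is, however, a genuine gap, and your proposal does not close it. The second inequality $\|\ddot q(t)\|\le\dot\mu^2(\Delta t)$ pits $e^{-t/(2\|X\|)}$ against $e^{-2\alpha t/\|X\|}$ and therefore needs $\alpha<1/4$, not $\alpha<1/2$; from the one-sided upper bounds $\|\ddot q\|\le\rho\|\exp(Qt)\|$ and Lemma~\ref{lem:A-1} no faster decay rate can be extracted. Neither of your two escape routes works in general: (a) the closed-loop identity $M(q)\ddot q=f(\dot q,q)-C(\dot q,q)\dot q-g(q)$ does not sharpen the estimate, because for a typical ITC such as PD with gravity compensation the dominant term $Pq_e+D\dot q$ decays at exactly the same rate $e^{-t/(2\|X\|)}$ as the state itself (only the Coriolis term, quadratic in $\dot q$, decays faster); and (b) condition (2) of Assumption~\ref{ass:1} requires $\|f(\dot q,q)\|\to0$, which fails outright whenever $g(q_d)\neq0$, since $f\to g(q_d)$ at equilibrium. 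The honest resolutions are either to tighten the constant in \eqref{eq:A-7} to $0<\alpha<1/4$, or to observe that Assumption~\ref{ass:1} only asks for \emph{some} admissible $\mu$ and one with $\alpha<1/4$ always exists. As it stands, the range $\alpha<1/2$ is justified only for the velocity bound, and you should state the stronger restriction explicitly rather than leave the bridge as a hope.
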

\begin{proof}
Use Lemma \ref{lem:1}--item 6.
\end{proof}
 
The following theorem uses the previous results to obtain a PTC for system \eqref{eq:1}.

\begin{theorem}[Nominal system]
\label{the:1}
Let $\kappa\in\mathcal{K}_1(\tau)$ be the inverse function of $\mu\in\mathcal{M}_1(\tau)$ and function $h(\dot{q},q,t)$ be defined as follows:
\begin{equation}
\label{eq:19}
h(\dot{q},q,t)=\left\{\begin{array}{ll}\dot{\kappa}^2(\Delta t)f\left(\dot{q}/\dot{\kappa}(\Delta t),q\right) & \\
+\left[\ddot{\kappa}(\Delta t)/\dot{\kappa}(\Delta t)\right]M(q)\dot{q} & \\
+\left[1-\dot{\kappa}^2(\Delta t)\right]g(q) & \hspace{2mm} t\in[t_0,t_0+\tau) \vspace{2mm}\\ 
f\left(\dot{q},q\right) & \hspace{2mm} t\in[t_0+\tau,\infty)
\end{array}\right.
\end{equation}
Then, the closed-loop system constructed by the Euler-Lagrange equation \eqref{eq:1} and the PTC $u=h(\dot{q},q,t)$:
\begin{enumerate}
\item is uniformly stable, if system \eqref{eq:1} under $u=f(\dot{q},q)$ is stable and $\mu$ is selected such that Assumption \ref{ass:1} is satisfied.
\item is (globally) prescribed-time attractive and converges at $t=t_0+\tau$, if system \eqref{eq:1} under $u=f(\dot{q},q)$ is (globally) attractive and $\mu$ is selected such that Assumption \ref{ass:1} is satisfied.
\item is (globally) uniformly prescribed-time stable and converges at $t=t_0+\tau$, if system \eqref{eq:1} under $u=f(\dot{q},q)$ is (globally) asymptotically stable and $\mu$ is selected such that Assumption \ref{ass:1} is satisfied.
\item is (globally) uniformly prescribed-time stable and converges at $t=t_0+\tau$, if system \eqref{eq:1} under $u=f(\dot{q},q)$ is (globally) exponentially stable and $\mu$ satisfies \eqref{eq:A-7}.
\item avoids obstacles, if $u=f(\dot{q},q)$ is an obstacle avoidance controller. Generally, the system under the proposed PTC goes the same path but faster, within the specified time interval.
\end{enumerate}
\end{theorem}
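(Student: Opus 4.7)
The plan is to leverage the mapping strategy from Section~\ref{subsec:II-B} and reduce the analysis of the PTC closed-loop on $[t_0, t_0+\tau)$ to that of the ITC closed-loop on $[t_0,\infty)$ via a bijective time change. Define $\Delta \eta = \kappa(\Delta t)$ (so $\Delta t = \mu(\Delta \eta)$) and let $p(\eta) := q(t)$. Then $\dot{q} = \dot{\kappa}(\Delta t)\, p'(\eta)$ and $\ddot{q} = \dot{\kappa}^2(\Delta t)\, p''(\eta) + \ddot{\kappa}(\Delta t)\, p'(\eta)$, with primes denoting derivatives with respect to $\eta$.

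Substituting into $M(q)\ddot{q} + C(\dot{q},q)\dot{q} + g(q) = h(\dot{q},q,t)$ and using the linearity of $C$ in its first argument, one obtains
\begin{equation*}
\dot{\kappa}^2 \bigl[M(p)p'' + C(p',p)p'\bigr] + \ddot{\kappa}\, M(p)\, p' + g(p) = h.
\end{equation*}
Inserting the prescribed form of $h$ from \eqref{eq:19}, the extra terms cancel and what remains is $M(p)p'' + C(p',p)p' + g(p) = f(p',p)$, i.e., the ITC closed loop on the $\eta$-timeline. Since $\kappa\in\mathcal{K}_1(\tau)$ yields $\dot{\kappa}(0)=1$, the initial data $(p(t_0),p'(t_0))=(q(t_0),\dot{q}(t_0))$ match, so $q_P(t) = q_I(\eta)$ along corresponding trajectories, where $q_P$ and $q_I$ denote the PTC- and ITC-controlled responses.

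With this equivalence established, I would deduce each of the five assertions in turn. Stability (item~1) follows because, for any $\varepsilon>0$, the $\delta$ certifying stability of the ITC closed loop also certifies stability of the PTC closed loop via the identification of initial states; uniformity is inherited from the time-invariance of the ITC equation. Attractivity (items~2--4) is immediate: $\lim_{\eta\to\infty} q_I(\eta) = q_d$ and the time change sends $\eta\to\infty$ to $t\to t_0+\tau^-$, whence $q_P(t)\to q_d$. One then argues that at $t = t_0+\tau$ the trajectory sits at $q=q_d$ with $\dot{q}=0$, so the switch in \eqref{eq:19} to $u=f(\dot{q},q)$ leaves the system at equilibrium thereafter. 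Obstacle avoidance (item~5) is automatic, as $q_P$ traces the same spatial path as $q_I$, merely on a squeezed time interval.

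The main obstacle, and the motivation for Assumption~\ref{ass:1}, is controlling the behaviour of $h$ and of $\dot{q}_P$ as $t\to t_0+\tau^-$. From the time change, $\dot{q}_P(t) = \dot{\kappa}(\Delta t)\,\dot{q}_I(\eta)$, and the bound $\|\dot{q}_I(\eta)\|\le\dot{\mu}(\Delta \eta) = 1/\dot{\kappa}(\Delta t)$ only yields $\|\dot{q}_P\|\le 1$. To upgrade this to $\dot{q}_P\to 0$ and to keep the controller bounded (so that the switch at $t_0+\tau$ is admissible), items (1) or (2) of Assumption~\ref{ass:1} are invoked to dominate the potentially singular terms $\dot{\kappa}^2 f(\dot{q}/\dot{\kappa},q)$ and $(\ddot{\kappa}/\dot{\kappa})M(q)\dot{q}$, in the spirit of the estimates in Lemma~\ref{lem:2}. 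For item~4, the exponential-decay hypothesis together with the specific $\mu$ in \eqref{eq:A-7} furnishes a valid $\mathcal{M}_1(\tau)$ function satisfying Assumption~\ref{ass:1}, reducing it to item~3.
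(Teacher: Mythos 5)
Your overall route coincides with the paper's: the time change $\eta=\kappa(\Delta t)+t_0$, the cancellation showing that the closed loop under \eqref{eq:19} on $[t_0,t_0+\tau)$ is exactly the mapped ITC closed loop, inheritance of each item from the corresponding ITC property, and the appeal to Assumption~\ref{ass:1} and Lemma~\ref{lem:2} to tame the singular gains near $t_0+\tau$. Your substitution computation is in fact more explicit than the paper's one-line ``simple substitution.'' However, two steps in your plan do not go through as written, and they are precisely where the paper does its real work.

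First, for item 1 you claim the $\delta$ certifying ITC stability ``also certifies stability of the PTC closed loop via the identification of initial states.'' This fails because the identification of states rescales the velocity by an unbounded factor: $\dot{q}_P(t)=\dot{\kappa}(\Delta t)\,\dot{q}_I(\eta)$ with $\dot{\kappa}\to\infty$, so smallness of $\|\dot{q}_I\|$ gives no bound on $\|\dot{q}_P\|$ without controlling the ratio $\|\dot{q}_I(\eta)\|/\dot{\mu}(\Delta\eta)$ uniformly in time \emph{and} showing that this ratio shrinks with the initial data. The paper does exactly this: it introduces $c_1(\delta,\Delta t)=\sup\|\dot{q}_I\|$ over initial conditions of norm below $\delta$ and $c_2(\delta)=\sup_{\Delta t}c_1(\delta,\Delta t)/\dot{\mu}(\Delta t)$, and argues that $c_2(\delta)$ can be made arbitrarily small by shrinking $\delta$. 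Assumption~\ref{ass:1} alone, which you invoke, only yields the fixed bound $\|\dot{q}_P\|\leq 1$; that gives boundedness, not an $\varepsilon$--$\delta$ stability statement.

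Second, for attractivity you assert that at $t=t_0+\tau$ the trajectory ``sits at $q=q_d$ with $\dot{q}=0$,'' yet, as you yourself observe, the available estimate is only $\|\dot{q}_P\|\leq 1$; concluding $\dot{q}_P\to 0$ would require $\|\dot{q}_I\|=o(\dot{\mu})$, which Assumption~\ref{ass:1} does not literally provide. The paper does not argue this way: it instead proves that the control signal is continuous at the switch, i.e., $\lim_{(\dot{q},q,\Delta t)\to(0,q_d,\tau^-)}h(\dot{q},q,t)=g(q_d)$, by identifying the dangerous term $(\ddot{\kappa}/\dot{\kappa})M(q)\dot{q}$ in the prescribed-time closed loop with $-(\ddot{\mu}/\dot{\mu}^3)M(q)\dot{q}_I$ in the infinite-time closed loop and applying Lemma~\ref{lem:2} with $\alpha=3$ (together with Assumption~\ref{ass:1} for the boundedness of $\dot{\kappa}^2 f(\dot{q}/\dot{\kappa},q)$), so that the switch to $u=f(\dot{q},q)$ at $t_0+\tau$ is well posed. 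You gesture at Lemma~\ref{lem:2} ``in the spirit of the estimates'' but never say which quantity it is applied to; making that identification explicit is the missing step.
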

\begin{proof}
First, we prove item 1 about Lyapunov stability. We concern about $\dot{q}(t)$ because $q(t)$ is the same with both infinite and prescribed-time controllers. Recall from the mapping rules of Subsection \ref{subsec:II-B} that if $\dot{q}(t)$ be the velocity of the infinite-time system, then the velocity of the prescribed-time system is $\dot{q}(t)/\dot{\mu}(\Delta t)$ (with the same initial conditions since $\kappa(t)$ is class $\mathcal{K}_1$ which means that $\dot{\kappa}(0)=1$). If the infinite-time system is stable, then for every $\varepsilon>0$ there exists $\xi(\varepsilon)>0$ such that $\|[q^T(t_0),\dot{q}^T(t_0)]\|<\xi(\varepsilon)$ implies $\|[q^T(t),\dot{q}^T(t)]\|<\varepsilon$. Consequently, the Lyapunov stability is proved for the prescribed-time system if for every $\sigma>0$, there exists $\delta(\sigma,t_0)>0$ such that $\|[q^T(t_0),\dot{q}^T(t_0)]\|<\delta(\sigma,t_0)$ implies $\|[q^T(t),\dot{q}^T(t)/\dot{\mu}(\Delta t)]\|<\sigma$ and the uniformity is then shown by proving the independency of $\delta(\sigma,t_0)$ to $t_0$. Accordingly, define the following function associated with the infinite-time response of system \eqref{eq:1}:
\begin{equation}
\label{eq:pf1}
c_1(\delta,t,t_0)=\underset{\left\|\left[{q(t_0)\atop \dot{q}(t_0)}\right]\right\|<\delta}{\mathrm{sup}}(\|\dot{q}(t,q(t_0),\dot{q}(t_0))\|)
\end{equation}
As the infinite-time system is autonomous, $c_1(\delta,t,t_0)$ is independent of $t_0$, thus we write $c_1(\delta,\Delta t)\equiv c_1(\delta,t,t_0)$. Note that since the infinite-time system is stable, for any $\bar{c}_1>0$, there exists a $\delta>0$ such that $c_1(\delta,\Delta t)<\bar{c}_1$. Besides, define the following function:
\begin{equation}
\label{eq:pf1}
c_2(\delta)=\underset{\Delta t\in[0,\infty)}{\mathrm{sup}}\left(c_1(\delta,\Delta t)/\dot{\mu}(\Delta t)\right)
\end{equation}
According to \eqref{eq:pf1}, one can write $\|\dot{q}(t)\|\leq c_2(\delta)\dot{\mu}(\Delta t)$. Note that for any $\bar{c}_2>0$, there exists a $\delta>0$ such that $c_2(\delta)<\bar{c}_2$, since $c_1(\delta,\Delta t)$ can be set arbitrarily small as $c_1(\delta,\Delta t)<\bar{c}_1$ for any $\bar{c}_1$. Therefore, an arbitrary bound on $\|\dot{q}(t)/\dot{\mu}(\Delta t)\|$ corresponds to a bound on the initial conditions, and because the same claim is true for $q(t)$, we conclude that the mapped system is stable. Moreover, the uniformity in time is deduced from the fact that $c_2(\delta)$, and thus $\delta$, are independent of $t_0$.

For the rest of the items we prove the prescribed-time attractivity condition. System \eqref{eq:1} can be mapped onto a finite interval, similar to the mapping strategy of Subsection \ref{subsec:II-B}. The mapped version of system \eqref{eq:1} can be stated as the following form after substituting $p$ by $q$ and $\eta$ by $t$: 
\begin{equation}
\label{eq:21}
\begin{split}
&M(q)\ddot{q}+C(\dot{q},q)\dot{q}+\dot{\kappa}^2(\Delta t)g(q)-\frac{\ddot{\kappa}(\Delta t)}{\dot{\kappa}(\Delta t)}M(q)\dot{q}\\
&=\dot{\kappa}^2(\Delta t)f(\dot{q}/\dot{\kappa}(\Delta t),q)
\end{split}
\end{equation}
The above equation is achieved by a one-to-one mapping of an attractive infinite-time closed-loop solution. At this step, we seek a $u=h(\dot{q},q,t)$ for system \eqref{eq:1} such that the closed-loop system behaves similar to \eqref{eq:21}. It can be shown by a simple substitution that the goal is achievable by the use of \eqref{eq:19} for $t\in[t_0,t_0+\tau)$. However, for $t\in[t_0+\tau,\infty)$, we need to prove that the closed-loop response does not change the state of the system. According to the form of system \eqref{eq:1} and the control law stated in \eqref{eq:19}, it is sufficient to prove $\lim_{(\dot{q},q,\Delta t)\rightarrow(0,q_d,\tau^-)}h(\dot{q},q,t)=g(q_d)$, i.e., $h(\dot{q},q,t)$ is continuous. Since $u=f(\dot{q},q)$ stabilizes system \eqref{eq:1}, $\lim_{(\dot{q},q)\rightarrow(0,q_d)}f(\dot{q},q)=g(q_d)$, therefore $\lim_{(\dot{q},q,\Delta t)\rightarrow(0,q_d,\tau^-)}h(\dot{q},q,t)=\lim_{(\dot{q},\Delta t)\rightarrow(0,\tau^-)}[\ddot{\kappa}(\Delta t)/\dot{\kappa}(\Delta t)]M(q)\dot{q}+g(q_d)$. Also, it is provable that the value of $\lim_{(\dot{q},\Delta t)\rightarrow(0,\tau^-)}(\ddot{\kappa}/\dot{\kappa})\dot{q}$ in a closed-loop system with $u=h(\dot{q},q,t)$ is equal to the value of $\lim_{t\rightarrow\infty}-(\ddot{\mu}/\dot{\mu}^3)\dot{q}$ in a closed-loop system with $u=f(\dot{q},q)$ (the argument of $\kappa$ and its derivatives is $\Delta t$). According to Lemma \ref{lem:2}, $\lim_{(\dot{q},q,\Delta t)\rightarrow(0,q_d,\tau^-)}h(\dot{q},q,t)=g(q_d)$ as long as $\dot{q}(t)$ is bounded by a $\dot{\mu}$ at infinity. The boundedness of $\|h(\dot{q},q,t)\|$ is provable according to the boundedness of $(\ddot{\kappa}/\dot{\kappa})\|\dot{q}\|$ and $\dot{\kappa}^2\|f(\dot{q},q)\|$ (which the latter is equivalent to the boundedness of $\dot{\kappa}^2\|\ddot{q}\|$) since all the terms expressed by \eqref{eq:19} are bounded after satisfaction of Assumption \ref{ass:1}. 
\end{proof}

The following corollary presents a special form of the control law discussed in Theorem \ref{the:1} for PD controllers with gravity compensation:

\begin{corollary}[PD with gravity compensation]
\label{cor:1}
There exists a $\kappa\in\mathcal{K}(\tau)$ such that the following gain scheduling PD controller with gravity compensation is a globally uniformly prescribed-time stable controller for system \eqref{eq:1}, which converges in a prescribed time $\tau$:
\begin{equation}
\label{eq:24}
u=\tilde{P}(t)q_e+\tilde{D}(t)\dot{q}+g(q)
\end{equation}
where $q_e=q-q_d$ and
\begin{equation}
\label{eq:25}
\tilde{P}(t)=\left\{\begin{array}{ll}\dot{\kappa}^2(\Delta t)P\\
P\end{array}\hspace{2mm}\begin{array}{ll}t\in[t_0,t_0+\tau)\\
t\in[t_0+\tau,\infty)\end{array}\right.
\end{equation}
\begin{equation}
\label{eq:26}
\resizebox{1\columnwidth}{!}{
$\tilde{D}(t)=\left\{\begin{array}{ll}\dot{\kappa}(\Delta t)D+[\ddot{\kappa}(\Delta t)/\dot{\kappa}(\Delta t)]M(q)\\
D\end{array}\hspace{2mm}\begin{array}{ll}t\in[t_0,t_0+\tau)\\
t\in[t_0+\tau,\infty)\end{array}\right.$}
\end{equation}
if negative-definite $P,D\in\mathbb{M}^n$ satisfy Assumption \ref{ass:1}. 
\end{corollary}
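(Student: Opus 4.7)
The plan is to treat this as a direct specialization of Theorem~\ref{the:1}~item~4 with the ITC
\[
f(\dot{q},q) = P\,q_e + D\,\dot{q} + g(q),
\]
followed by an algebraic simplification of \eqref{eq:19} that produces the claimed gain-scheduled form.

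First I would verify that, for negative-definite $P,D$, the nominal closed-loop under the ITC is globally exponentially stable. Substituting $u=Pq_e+D\dot{q}+g(q)$ into \eqref{eq:1} cancels $g(q)$ and gives $M(q)\ddot{q}+C(\dot{q},q)\dot{q}=Pq_e+D\dot{q}$, which is the standard PD-with-gravity-compensation regulator. Using the skew-symmetry of $\dot{M}(q)-2C(\dot{q},q)$ together with the Lyapunov function $V=\tfrac{1}{2}\dot{q}^{T}M(q)\dot{q}-\tfrac{1}{2}q_e^{T}Pq_e$ (valid since $P\prec0$) yields a negative-definite $\dot{V}$ in $(\dot{q},q_e)$, from which local and then (via the quadratic lower/upper bounds on $V$) global exponential stability follow. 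In particular, linearization about $q=q_d$ gives a Hurwitz matrix $Q$ of the shape appearing in the Proposition preceding the theorem, so the exponential bounds on $\|[q_e^{T},\dot{q}^{T}]\|$ and $\|\ddot{q}\|$ required there are in force.

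Next I would plug this $f$ into \eqref{eq:19}. Because $f$ is affine in its first argument,
\[
\dot{\kappa}^2(\Delta t)\,f\!\left(\dot{q}/\dot{\kappa}(\Delta t),q\right)
= \dot{\kappa}^2(\Delta t)\,P\,q_e + \dot{\kappa}(\Delta t)\,D\,\dot{q} + \dot{\kappa}^2(\Delta t)\,g(q).
\]
Adding $[\ddot{\kappa}(\Delta t)/\dot{\kappa}(\Delta t)]M(q)\dot{q}$ and $[1-\dot{\kappa}^2(\Delta t)]g(q)$ collapses the gravity terms to $g(q)$ and regroups the remaining pieces to give exactly \eqref{eq:24}--\eqref{eq:26} on $[t_0,t_0+\tau)$. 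On $[t_0+\tau,\infty)$, the definitions reduce to $\tilde{P}(t)=P$, $\tilde{D}(t)=D$, matching the second branch of \eqref{eq:19}, which is $f(\dot{q},q)$.

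Finally, invoking the Proposition following Lemma~\ref{lem:A-1}, I would pick $\mu\in\mathcal{M}_1(\tau)$ with derivative \eqref{eq:A-7}, using the Lyapunov matrix $X$ associated with the linearized closed-loop; this $\mu$ satisfies Assumption~\ref{ass:1}, and its inverse $\kappa$ lies in $\mathcal{K}_1(\tau)$. Theorem~\ref{the:1}~item~4 then delivers global uniform prescribed-time stability of the closed-loop at time $t_0+\tau$.

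The only subtle point is interpreting the hypothesis ``$P,D$ satisfy Assumption~\ref{ass:1}'': the gains must be chosen so the exponential decay rate of the ITC (controlled by $\|X\|$) dominates the growth rate encoded in $\dot{\mu}$, which is precisely the range $0<\alpha<0.5$ in \eqref{eq:A-7}. Once this compatibility is recorded, the corollary is essentially a rewriting exercise plus an appeal to Theorem~\ref{the:1}; the technical work was already done in the theorem and its supporting proposition.
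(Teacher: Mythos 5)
There is a genuine gap in your stability argument. With the Lyapunov function $V=\tfrac{1}{2}\dot{q}^{T}M(q)\dot{q}-\tfrac{1}{2}q_e^{T}Pq_e$ and the skew-symmetry of $\dot{M}-2C$, the derivative along the closed loop is $\dot{V}=\dot{q}^{T}D\dot{q}$, which is only negative \emph{semi}-definite in $(\dot{q},q_e)$: it vanishes on the whole set $\{\dot{q}=0\}$ regardless of $q_e$. So this function does not give you negative-definiteness, let alone the quadratic decay needed for global exponential stability; the standard conclusion for PD with gravity compensation is global \emph{asymptotic} stability via LaSalle (or Barbashin--Krasovskii), with exponential convergence available only locally from the linearization or semi-globally with a modified cross-term Lyapunov function. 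Consequently your route through Theorem \ref{the:1}, item 4 and the Proposition with \eqref{eq:A-7} is not supported: that Proposition requires exponential envelopes $\|[q_e^T,\dot{q}^T]\|\leq\varrho\|\exp(Qt)\|$ and $\|\ddot{q}\|\leq\rho\|\exp(Qt)\|$ valid for all $t\in[0,\infty)$ and all initial conditions, which you have not established.

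The paper sidesteps this entirely: its proof simply observes that $f(\dot{q},q)=Pq_e+D\dot{q}+g(q)$ is an asymptotically stable ITC and invokes Theorem \ref{the:1}, item 3, with the satisfaction of Assumption \ref{ass:1} taken as the corollary's explicit hypothesis (``if negative-definite $P,D$ satisfy Assumption \ref{ass:1}''), so no exponential rate and no construction of $\mu$ via \eqref{eq:A-7} is needed. Your algebraic reduction of \eqref{eq:19} to the gain-scheduled form \eqref{eq:24}--\eqref{eq:26} is correct and is a worthwhile addition (the paper leaves it implicit), but to repair the proof you should either (a) drop the exponential-stability claim and argue exactly as the paper does through item 3, using Assumption \ref{ass:1} as given, or (b) if you insist on item 4, supply a genuine proof of global exponential stability for this closed loop, which the quoted Lyapunov function alone cannot deliver.
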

\begin{proof}
Consider the ITC as $f(\dot{q},q)=P(q-q_d)+D\dot{q}+g(q)$, $P,D<0$, which is an asymptotically stable controller for system \eqref{eq:1} in $q=q_d$ and use Theorem \ref{the:1}--item 3.
\end{proof}

In the following corollary, there is no need to satisfy Assumption \ref{ass:1}.

\begin{corollary}[Feedback linearization]
\label{cor:2}
Let $P,D\in\mathbb{M}^n$ be arbitrary negative definite matrices. The following feedback linearization controller is a globally uniformly prescribed-time stable controller for system \eqref{eq:1}, which converges in a prescribed time $\tau$:
\begin{equation}
\label{eq:A-3}
u=C(\dot{q},q)\dot{q}+g(q)+M(q)\left(\tilde{P}(t)q_e+\tilde{D}(t)\dot{q}\right)
\end{equation}
where $q_e=q-q_d$ and
\begin{equation}
\label{eq:25_A}
\tilde{P}(t)=\left\{\begin{array}{ll}\dot{\kappa}^2(\Delta t)P\\
P\end{array}\hspace{2mm}\begin{array}{ll}t\in[t_0,t_0+\tau)\\
t\in[t_0+\tau,\infty)\end{array}\right.
\end{equation}
\begin{equation}
\label{eq:26_A}
\tilde{D}(t)=\left\{\begin{array}{ll}\dot{\kappa}(\Delta t)D+\ddot{\kappa}(\Delta t)/\dot{\kappa}(\Delta t)\mathbb{I}_n\\
D\end{array}\hspace{2mm}\begin{array}{ll}t\in[t_0,t_0+\tau)\\
t\in[t_0+\tau,\infty)\end{array}\right.
\end{equation}
such that $\mu=\kappa^{-1}$ satisfies \eqref{eq:A-7} in which $X$ is calculated from \eqref{eq:A-2} where $Q$ is
\begin{equation}
Q=\begin{bmatrix}
0 & \mathbb{I}_n\\
P & D
\end{bmatrix}
\end{equation}
\end{corollary}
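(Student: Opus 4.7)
The plan is to reduce this corollary directly to Theorem \ref{the:1}, item 4, by extracting the underlying infinite-time controller and verifying that it produces an exponentially stable closed loop.

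First, I would identify the ITC behind \eqref{eq:A-3} as $f(\dot{q},q) = C(\dot{q},q)\dot{q} + g(q) + M(q)(Pq_e + D\dot{q})$. Substituting $u = f(\dot{q},q)$ into \eqref{eq:1} and left-multiplying by $M^{-1}(q)$, which exists because $M(q)$ is positive definite, cancels the $C(\dot{q},q)\dot{q}$ and $g(q)$ terms and reduces the closed loop to the linear time-invariant error dynamics $\ddot{q}_e = D\dot{q}_e + P q_e$, equivalently $\dot{z}=Qz$ for $z=[q_e^T,\dot{q}_e^T]^T$ and $Q$ as given in the statement.

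Second, I would verify that $Q$ is Hurwitz whenever $P$ and $D$ are symmetric negative definite. A brief Lyapunov--LaSalle argument using $V(z) = \dot{q}_e^T\dot{q}_e - q_e^T P q_e$ should work: $V$ is positive definite since $-P>0$, and the cross terms in $\dot{V}$ cancel by the symmetry of $P$, leaving $\dot{V} = 2\dot{q}_e^T D\dot{q}_e \leq 0$; on the largest invariant set in $\{\dot{q}_e\equiv0\}$ the dynamics force $Pq_e=0$, hence $q_e=0$. Since global asymptotic stability of a constant-coefficient linear system is equivalent to global exponential stability, the Lyapunov equation \eqref{eq:A-2} admits a unique $X>0$, so the choice \eqref{eq:A-7} is well defined.

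Third, with global exponential stability of the ITC in hand, I would invoke Theorem \ref{the:1}--item 4 using the $\mu\in\mathcal{M}_1(\tau)$ prescribed by \eqref{eq:A-7}. This yields the PTC $h(\dot{q},q,t) = \dot{\kappa}^2(\Delta t) f\bigl(\dot{q}/\dot{\kappa}(\Delta t),q\bigr) + [\ddot{\kappa}(\Delta t)/\dot{\kappa}(\Delta t)]M(q)\dot{q} + [1-\dot{\kappa}^2(\Delta t)]g(q)$ on $[t_0,t_0+\tau)$. Exploiting the linearity of $C(\cdot,q)$ in its first argument, which gives $C(\dot{q}/\dot{\kappa},q)(\dot{q}/\dot{\kappa}) = \dot{\kappa}^{-2}C(\dot{q},q)\dot{q}$, the outer $\dot{\kappa}^2$ prefactor restores $C(\dot{q},q)\dot{q}$, the $\dot{\kappa}^2 g(q)$ and $(1-\dot{\kappa}^2)g(q)$ contributions collapse to $g(q)$, and the remaining $M(q)$ terms reorganize into $M(q)\bigl(\dot{\kappa}^2(\Delta t) P q_e + [\dot{\kappa}(\Delta t) D + (\ddot{\kappa}(\Delta t)/\dot{\kappa}(\Delta t))\mathbb{I}_n]\dot{q}\bigr)$, matching \eqref{eq:A-3} with the time-varying gains \eqref{eq:25_A} and \eqref{eq:26_A} exactly. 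I would close by noting that Assumption \ref{ass:1} does not appear as a separate hypothesis here — unlike in Corollary \ref{cor:1} — because the Proposition preceding Theorem \ref{the:1} shows that the $\mu$ in \eqref{eq:A-7} automatically satisfies Assumption \ref{ass:1} whenever the ITC produces an exponentially stable closed loop, which is precisely what the second step established. I expect the main obstacle to be the Hurwitz verification for $Q$ from negative definiteness of $P$ and $D$; the Lyapunov--LaSalle sketch above dispatches it in a few lines, and everything else is direct substitution plus a single appeal to Theorem \ref{the:1}.
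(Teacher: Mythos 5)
Your proposal is correct and follows essentially the same route as the paper, whose proof is the one-line instruction to take $f(\dot{q},q)=C(\dot{q},q)\dot{q}+g(q)+M(q)(Pq_e+D\dot{q})$ as the ITC and apply Theorem~\ref{the:1}--item 4. You simply fill in the details the paper leaves implicit (the Hurwitz check for $Q$, the substitution showing $h$ reduces to \eqref{eq:A-3} via linearity of $C$ in $\dot q$, and the observation that \eqref{eq:A-7} discharges Assumption~\ref{ass:1}), all of which are consistent with the paper's intent.
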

\begin{proof}
Use the ITC $u=f(\dot{q},q)=C(\dot{q},q)\dot{q}+g(q)+M(q)(Pq_e+D\dot{q})$ and implement Theorem \ref{the:1}--item 4.
\end{proof}

\begin{theorem}[Output assessment]
\label{th:3}
Suppose $t_0=0$, let $V(\cdot,\cdot,\cdot),W(\cdot,\cdot,\cdot):\mathbb{R}^{n}\times\mathbb{R}^{n}\times[0,\infty)\rightarrow\mathbb{R}^{m}$ be two vector-valued functions, and $\kappa\in\mathcal{K}_1(\tau)$ be the inverse function of $\mu\in\mathcal{M}_1(\tau)$. Then, an output $V(\dot{q},q,\mu)\coloneqq W(\dot{q}/\dot{\kappa}(\mu),q,\kappa(\mu))$ of system \eqref{eq:1} under the PTC $u=h(\dot{q},q,t)$ at $\mu$, is equal to an output $W(\dot{q},q,t)=V(\dot{q}/\dot{\mu}(t),q,\mu(t))$ of the same system under the ITC $u=f(\dot{q},q,t)$ at $t=\kappa(\mu)$. Moreover, the equalities hold for the sign of their derivative with respect to their own time scales.
\end{theorem}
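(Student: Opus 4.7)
The plan is to observe that the claimed identities are exactly the coordinate change of Subsection \ref{subsec:II-B} rewritten at the level of outputs, and that the sign statement follows from the strict monotonicity of $\mu$ through one chain-rule step.

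First I would make the relation between the two closed-loop trajectories explicit. Let $q_I(t)$ denote the trajectory of \eqref{eq:1} under the ITC and $q_P(\eta)$ the trajectory of \eqref{eq:1} under the PTC constructed in Theorem \ref{the:1}, both starting from the common pair $(q_0,\dot q_0)$ at $t_0=0$. By design of $h(\dot q,q,t)$ together with the mapping of Subsection \ref{subsec:II-B}, and using $\dot\mu(t)=1/\dot\kappa(\mu(t))$ from Lemma \ref{lem:1}, one has on $[0,\tau)$
\begin{equation*}
q_P(\mu(t))=q_I(t),\qquad q_P'(\mu(t))=\dot q_I(t)/\dot\mu(t)=\dot\kappa(\mu(t))\,\dot q_I(t),
\end{equation*}
where $q_P'\coloneqq dq_P/d\eta$. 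The $\mathcal{K}_1$ hypothesis $\dot\kappa(0)=1$ is precisely what guarantees that the two trajectories share the same initial velocity, so the mapping is valid from $t=0$.

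Second I would substitute these identities into the definition of $V$. Setting $\eta=\mu(t)$, equivalently $t=\kappa(\eta)$, evaluation of $V$ along the PTC trajectory yields
\begin{equation*}
V(q_P'(\eta),q_P(\eta),\eta)=W\!\bigl(q_P'(\eta)/\dot\kappa(\eta),\,q_P(\eta),\,\kappa(\eta)\bigr)=W(\dot q_I(\kappa(\eta)),\,q_I(\kappa(\eta)),\,\kappa(\eta)),
\end{equation*}
which is $W$ evaluated along the ITC trajectory at $t=\kappa(\eta)$. The dual formula $W(\dot q,q,t)=V(\dot q/\dot\mu(t),q,\mu(t))$ then follows by swapping the roles of $\mu$ and $\kappa$, since they are mutual inverses.

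Third, for the sign claim I would set $\tilde V(\eta)\coloneqq V(q_P'(\eta),q_P(\eta),\eta)$ and $\tilde W(t)\coloneqq W(\dot q_I(t),q_I(t),t)$. The previous paragraph gives $\tilde W(t)=\tilde V(\mu(t))$, so by the chain rule
\begin{equation*}
\frac{d\tilde W}{dt}(t)=\frac{d\tilde V}{d\eta}(\mu(t))\,\dot\mu(t).
\end{equation*}
Since $\dot\mu(t)>0$ on $[0,\infty)$ by Definition \ref{def:1}, each component of $d\tilde W/dt$ has the same sign as the corresponding component of $d\tilde V/d\eta$, establishing the sign part. I do not foresee a real technical obstacle here; the only care required is to untangle the notation of the statement, where $\mu$ is used simultaneously as the time-scaling function and as the dummy PTC-time variable.
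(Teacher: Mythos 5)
Your proposal is correct and follows essentially the same route as the paper: the paper's proof likewise just invokes the mapping rules of Subsection \ref{subsec:II-B} (same positions, velocities related by $\dot q/\dot\mu$) and concludes by ``substituting the identical values.'' The only cosmetic difference is in the sign claim, where the paper argues via the difference quotient $\lim_{\varsigma\to0}(V(t+\varsigma)-V(t))/\varsigma$ under the monotone time change rather than your chain-rule step with $\dot\mu>0$; both rest on the same strict monotonicity of $\mu$.
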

\begin{proof}
Consider the fact that the closed-loop dynamics under the proposed PTC is the mapped version of the same system under the ITC, which is used as the core of the PTC formulation. Recall from the rules of the mapping technique discussed in Subsection \ref{subsec:II-B} that the position vectors are the same in infinite-time and prescribed-time systems, but if the velocity of the infinite-time system at $t$ is shown by $\dot{q}(t)$, then the velocity of the mapped prescribed-time system at $\mu(t)$ is $\dot{q}(t)/\dot{\mu}(t)$. The claim is proved by substituting the identical values. For the derivatives, consider the fact that sign of $dV/dt=\lim_{\varsigma\rightarrow0}(V(t+\varsigma)-V(t))/\varsigma$ does not change as long as sign of $V(t+\varsigma)-V(t)$ remains unchanged.
\end{proof}

\begin{corollary}[Prescribed-time Lyapunov function]
\label{rem:bound}
Suppose $t_0=0$ and $W(\dot{q},q,t)$ is a Lyapunov function for system \eqref{eq:1} under an ITC. Then, $V(\dot{q},q,\mu)=W(\dot{q}/\dot{\kappa}(\mu),q,\kappa(\mu))$ is a Lyapunov function for the corresponding PTC.
\end{corollary}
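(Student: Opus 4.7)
The plan is to verify, for $V(\dot{q},q,\mu)$ under the PTC-driven dynamics, the two defining properties of a Lyapunov function: positive-definiteness about the equilibrium $(\dot{q},q)=(0,q_d)$ and a non-positive derivative along closed-loop trajectories. Both will follow by pulling back the corresponding properties of $W$ through the time-scaling $t=\kappa(\mu)$ and the velocity rescaling $\dot{q}\mapsto\dot{q}/\dot{\kappa}(\mu)$ already set up by Theorem \ref{th:3}.

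First, I would address positive-definiteness as a function of the state. By Definition \ref{def:1} and Lemma \ref{lem:1}, for every $\mu\in[0,\tau)$ the scalar $\dot{\kappa}(\mu)$ is finite and strictly positive, so the map $\dot{q}\mapsto\dot{q}/\dot{\kappa}(\mu)$ is a bijection of $\mathbb{R}^n$ that fixes the origin. Consequently $V(0,q_d,\mu)=W(0,q_d,\kappa(\mu))=0$ since $W$ vanishes at equilibrium, and for any $(\dot{q},q)\neq(0,q_d)$ the rescaled argument $(\dot{q}/\dot{\kappa}(\mu),q)$ is also distinct from the equilibrium, yielding $V(\dot{q},q,\mu)=W(\dot{q}/\dot{\kappa}(\mu),q,\kappa(\mu))>0$ by positive-definiteness of $W$.

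Second, I would establish the derivative condition by directly invoking the final sentence of Theorem \ref{th:3}. Along the PTC closed-loop trajectory at prescribed-time $\mu$, the output $V$ coincides with $W$ evaluated along the corresponding ITC closed-loop trajectory at infinite-time $t=\kappa(\mu)$, and the sign of $dV/d\mu$ agrees with the sign of $dW/dt$ at that corresponding instant. Since $W$ is a Lyapunov function for the ITC-governed system, $dW/dt\leq0$ holds along any trajectory, so $dV/d\mu\leq0$ along any PTC trajectory on $[0,\tau)$.

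The main technical delicacy I anticipate concerns the endpoint $\mu\to\tau^-$, where $\dot{\kappa}(\mu)\to\infty$ by Lemma \ref{lem:1} and $V(\dot{q},q,\mu)$ formally degenerates into $W(0,q,\kappa(\mu))$, losing strict $\dot{q}$-positive-definiteness in the limit. However, by Theorem \ref{the:1} the closed-loop PTC trajectory satisfies $(\dot{q},q)\to(0,q_d)$ precisely as $\mu\to\tau^-$, so the degenerate limit is never visited along the actual solution; the Lyapunov interpretation is well-posed on $[0,\tau)$ and extends continuously to $\mu=\tau$ by the convergence guaranteed in Theorem \ref{the:1}.
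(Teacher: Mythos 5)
Your proof is correct and follows the route the paper intends: the corollary is stated as an immediate consequence of Theorem \ref{th:3}, and you invoke exactly that theorem to transfer both the value of $W$ and the sign of its derivative to $V$ along the mapped trajectory. Your added checks (positive-definiteness via the fact that $\dot{\kappa}(\mu)\geq 1$ makes the velocity rescaling a bijection fixing the origin, and the behavior as $\mu\to\tau^-$) are sensible elaborations of details the paper leaves implicit, not a different argument.
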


\begin{remark}
\label{rem:bound}
According to Theorem \ref{th:3}, the controller magnitude can be considered as an output and analyzed in the infinite time domain to assess its behavior and bounds before implementation. Accordingly, assuming $t_0=0$, the proposed PTC in the infinite-time domain is $l(\dot{q}(t),q(t),t)=h(\dot{q}(\mu)/\dot{\kappa}(\mu),q(\mu),\kappa(\mu))=f(\dot{q},q)/\dot{\mu}^2-(\ddot{\mu}/\dot{\mu}^3)M(q)\dot{q}+(1-1/\dot{\mu}^2)g(q)$. Let us define the normalized control inputs as $\tilde{f}(\dot{q},q)=f(\dot{q},q)-g(q)$ and similarly, $\tilde{l}(\dot{q},q,t)=l(\dot{q},q,t)-g(q)$. The magnitude of the normalized PTC is then $\|\tilde{l}(\dot{q},q,t)\|=\|\tilde{f}(\dot{q},q)/\dot{\mu}^2-(\ddot{\mu}/\dot{\mu}^3)M(q)\dot{q}\|\leq\|\tilde{f}(\dot{q},q)\|/\dot{\mu}^2-(\ddot{\mu}/\dot{\mu}^3)\|M(q)\|\cdot\|\dot{q}\|$. Define $\mathcal{M}^{\prime}=\{\mu\in\mathcal{M}:\|\tilde{f}(\dot{q},q)\|<\dot{\mu}^2\}$ and $\mathcal{M}^{\prime\prime}=\{\mu\in\mathcal{M}:\|\dot{q}\|<-\ddot{\mu}/\dot{\mu}^3\}$. If $\mu\in\mathcal{M}^{\prime}\cap\mathcal{M}^{\prime\prime}$, then $\|\tilde{l}(\dot{q},q,t)\|\leq\delta(1+\|M(q)\|)$ for some positive $\delta<1$. The bound of the normalized controller is then dependent on the maximum value of $\|M(q)\|$ which is specifiable (maybe independent of the input signal) for the system.
\end{remark}

The control technique proposed by Theorem \ref{the:1} (and its following results) is proved to be usable for the nominal system; however, the following theorem proves that the same controller can be used for perturbed systems. 

\begin{theorem}[Perturbed system]
\label{the:2}
Suppose $t_0=0$ and $d(t)$ satisfies Assumption \ref{ass:d}. Let $\kappa\in\mathcal{K}_1(\tau)$ be the inverse function of $\mu\in\mathcal{M}_1(\tau)$, and function $h(\dot{q},q,t)$ defined as \eqref{eq:19}. Then, the perturbed system \eqref{eq:2} under the PTC $u=h(\dot{q},q,t)$ is (globally) prescribed-time attractive and converges at $t=\tau$, if the unperturbed system \eqref{eq:1} under $u=h(\dot{q},q,t)$ is (globally) prescribed-time attractive (see Theorem \ref{the:1}.) 
\end{theorem}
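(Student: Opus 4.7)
The plan is to reduce Theorem \ref{the:2} to the nominal result in Theorem \ref{the:1} by running the time-scale mapping of Subsection \ref{subsec:II-B} in reverse: I pull the actual perturbed closed-loop back to the infinite-time domain and verify that the only trace left by $d(t)$ is a bounded, vanishing forcing which cannot destroy attractivity.

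Concretely, I would substitute $u = h(\dot{q},q,t)$ from \eqref{eq:19} into the perturbed dynamics \eqref{eq:2} and then introduce a virtual infinite time $t$ via $q_\star(t) := q(\mu(t))$. Using Lemma \ref{lem:1}(1), in particular the identities $\dot{\kappa}(\mu) = 1/\dot{\mu}$ and $\ddot{\kappa}(\mu) = -\ddot{\mu}/\dot{\mu}^3$, each $\kappa$-dependent gain in $h$ rewrites as a $\mu$-dependent one in the new time. The $[\ddot{\kappa}/\dot{\kappa}]M(q)\dot{q}$ and $[1-\dot{\kappa}^2]g(q)$ pieces of \eqref{eq:19} are present precisely to cancel the Coriolis-like and gravity-like terms the chain rule produces---the same cancellation that yielded the nominal identity \eqref{eq:21} in the proof of Theorem \ref{the:1}. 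What survives is the nominal ITC closed-loop $M(q_\star)\ddot{q}_\star + C(\dot{q}_\star,q_\star)\dot{q}_\star + g(q_\star) = f(\dot{q}_\star,q_\star) + \tilde{d}(t)$ in the virtual infinite time, with a single residual forcing
\begin{equation*}
\tilde{d}(t) := \dot{\mu}^2(t)\,d(\mu(t)).
\end{equation*}

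Next I would bound $\tilde{d}$: since $\mu(t) \in [0,\tau)$ for all $t \in [0,\infty)$, Assumption \ref{ass:d} supplies an (unknown) $\bar{d}$ with $\|d(\mu(t))\| \leq \bar{d}$, while Lemma \ref{lem:1}(3) gives $\dot{\mu}^2(t) \to 0$, so $\tilde{d}$ is uniformly bounded on $[0,\infty)$ and decays to zero. Applying Lemma \ref{lem:2} with $\alpha = 2$ then delivers the same limit statements used in Theorem \ref{the:1} to get continuity of the control at $\Delta t = \tau^-$ and boundedness of $(\ddot{\kappa}/\dot{\kappa})\|\dot{q}\|$, so the boundedness and continuity arguments of that proof remain intact in the presence of $\tilde{d}$.

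To close the argument I would invoke robustness of the ITC against matched vanishing disturbances: the unperturbed ITC closed-loop is (globally) asymptotically stable by the standing hypothesis inherited from Theorem \ref{the:1}, while $\tilde{d}$ is matched, bounded, and converges to zero, so a vanishing-perturbation argument forces $q_\star(t) \to q_d$ as $t \to \infty$. Unwinding the mapping ($\eta = \mu(t)$, so $\eta \to \tau^-$ as $t \to \infty$) yields $q(\eta) \to q_d$ as $\eta \to \tau^-$, i.e., prescribed-time attractivity with convergence at $t = \tau$. The main hurdle is precisely this last step: converging-input-converging-state is not automatic for general nonlinear systems, so the argument must lean on the passivity-like structure of the Euler-Lagrange model (skew-symmetry of $\dot{M}-2C$) or, where needed, on a sharper ITC hypothesis such as local exponential stability so that a Khalil-style vanishing-perturbation estimate applies directly.
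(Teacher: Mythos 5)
Your proposal follows essentially the same route as the paper: substitute $u=h(\dot q,q,t)$ into the perturbed dynamics \eqref{eq:2}, reverse-map to the infinite-time domain to recover the nominal ITC closed loop forced by the residual term $\dot{\mu}^2(t)\,d(\mu(t))$, and observe via Assumption \ref{ass:d} and Lemma \ref{lem:1} that this term is bounded and vanishes as $t\rightarrow\infty$. The ``main hurdle'' you flag at the end --- that a vanishing matched disturbance does not automatically preserve attractivity for a general nonlinear system --- is genuine, but the paper's own proof does not resolve it either: it simply asserts that because the additional term in \eqref{eq:28} vanishes, the system ``acts same as'' the unperturbed ITC closed loop at infinity, so your more cautious closing step (a Khalil-style vanishing-perturbation estimate leaning on exponential stability or the Euler--Lagrange passivity structure) is, if anything, more rigorous than the published argument.
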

\begin{proof}
The closed-loop response of system \eqref{eq:2} under $u=h(\dot{q},q,t)$ is
\begin{eqnarray}\nonumber
\label{eq:27}
M(q)\ddot{q}+C(\dot{q},q)\dot{q}+\dot{\kappa}^2(t)g(q)-\frac{\ddot{\kappa}(t)}{\dot{\kappa}(t)}M(q)\dot{q}\\
=d(t)+\dot{\kappa}^2(t)f(\dot{q}/\dot{\kappa}(t),q)
\end{eqnarray}
To simplify the analysis, consider the reverse-mapping of \eqref{eq:27} in a conventional infinite-time domain: 
\begin{equation}
\label{eq:28}
M(q)\ddot{q}+C(\dot{q},q)\dot{q}+g(q)
=f(\dot{q},q)+\dot{\mu}^2d(\mu(t))
\end{equation}
Since Assumption \ref{ass:d} is satisfied, $\lim_{t\rightarrow\infty}\dot{\mu}^2d(\mu(t))=0$, the additional term in the right-hand side of \eqref{eq:28} vanishes, and the system acts same as the closed-loop response of system \eqref{eq:1} under the ITC at infinity.
\end{proof}
\begin{remark}
\label{rem:2}
Theorem \ref{the:2} states that the ITC $u=f(\dot{q},q)$ can be designed regardless of the disturbances. This is a useful feature for the PTCs as the designer does not need to consider an upper bound for the disturbances. However, the existence of disturbances may inevitably cause large variations in the control input. In fact, according to the disturbance rejection feature of the proposed controller, it also can be considered as a model-free online control scheme if the system model is entirely considered as a disturbance term. 
\end{remark}

\begin{remark}
\label{rem:gain}
It is provable that a controller, viewed as a gain multiplied by the state vector, cannot reach zero equilibrium at a (known or unknown) finite time unless the gain approaches infinity, which also is the case for prescribed-time controllers. As a result, the proposed controller of Theorem \ref{the:1} (and Corollaries \ref{cor:1} and \ref{cor:2}), may not be practically implementable in robotic systems in its presented form according to the unbounded divergence of $\dot{\kappa}(\Delta t)$ and $\ddot{\kappa}(\Delta t)/\dot{\kappa}(\Delta t)$ when limiting to $t=t_0+\tau$ which imposes problems in terms of storing the gain values on the robot's memory. To solve this problem, according to \cite{zhou2021prescribed}, one may cease the increment of gains by switching at $t=t_0+\tau-\epsilon$ to a new controller with finite gain values. We propose the following switching controller\footnote{MATLAB\textsuperscript{\tiny\textregistered} codes and Simulink\textsuperscript{\tiny\textregistered} models for the proposed controller can be found in \href{https://github.com/a-shakouri/prescribed-time-control}{https://github.com/a-shakouri/prescribed-time-control}} as a combination of state and time-dependent switching rules by which the coefficients remain bounded for all $t\in[t_0,\infty)$:
\begin{equation}
\label{eq:26p}
h(\dot{q},q,t)=\left\{\begin{array}{ll}
\dot{\kappa}^2(\Delta t)f\left(\dot{q}/\dot{\kappa}(\Delta t),q\right) & \\
+\left[\ddot{\kappa}(\Delta t)/\dot{\kappa}(\Delta t)\right]M(q)\dot{q} & \\
+\left[1-\dot{\kappa}^2(\Delta t)\right]g(q) &\hspace{2mm}(q,\dot{q},t)\in\mathcal{S} \vspace{2mm}\\
\dot{\kappa}^2(\Delta t_s)f\left(\dot{q}/\dot{\kappa}(\Delta t_s),q\right) & \\
+\left[\ddot{\kappa}(\Delta t_s)/\dot{\kappa}(\Delta t_s)\right]M(q)\dot{q} & \\
+\left[1-\dot{\kappa}^2(\Delta t_s)\right]g(q) &\hspace{2mm}(q,\dot{q},t)\notin\mathcal{S}
\end{array}\right.
\end{equation}
where the switching rule is defined by $\mathcal{S}$ as a set-valued function of user-defined constants $\epsilon>0$ and $\sigma>0$:
\begin{equation}
\label{eq:26pp}
\mathcal{S}(\epsilon,\sigma)=\{(q,\dot{q},t):t\leq t_0+\tau-\epsilon,\|[q^T,\dot{q}^T]\|\geq\sigma\}
\end{equation}
and $\Delta t_s=t_s-t_0$ with $t_s$ standing for the switching time. In a merely time-dependent switching, we have $\Delta t_s=\tau-\epsilon$, otherwise $t_s$ should be obtained and saved when the switching occurs. Note that the state-dependent switching automatically makes the system reach an arbitrary nonzero error bound $\sigma>0$ before $t=t_0+\tau$, while the time-dependent switching avoids the gains to violate the limits of the system's memory.
\end{remark}

The following algorithm summarizes the proposed PTC design method:

\begin{alg}[PTC design]
\label{al:1}
The input of this algorithm is system \eqref{eq:2} with unknown $d(t)$ satisfying Assumption \ref{ass:d}. The output is a (globally) prescribed-time attractive controller $u=h(\dot{q},q,t)$ for system \eqref{eq:2} such that the convergence occurs at $t=\tau$, the disturbance $d(t)$ is rejected before reaching $t=\tau$, and the system avoids collision with fixed obstacles. In this regard, select a desired time of convergence $\tau$. Then: 
\begin{enumerate}
\item Design a controller in order to asymptotically stabilize the closed-loop response of system \eqref{eq:1} and call it $u=f(\dot{q},q)$ (as shown in Fig. \ref{fig:0}). 
\item If the system is exponentially stable, calculate $\mu\in\mathcal{M}_1(\tau)$ from condition \eqref{eq:A-7} and go to step 5.
\item Select a (different) $\mu\in\mathcal{M}_1(\tau)$ (with lower decay rate).
\item If Assumption \ref{ass:1} is satisfied, continue; otherwise, go back to step 3. 
\item Select a sufficiently small value for $\epsilon>0$ and $\sigma>0$ to avoid infinite gains and numerical problems. 
\item Substitute $f(\dot{q},q)$ in \eqref{eq:26p} with $\kappa=\mu^{-1}$ and obtain $u=h(\dot{q},q,t)$ (as shown in Fig. \ref{fig:0}). 
\end{enumerate}
\end{alg}

\begin{figure}[!h]
\centering\includegraphics[width=1\linewidth]{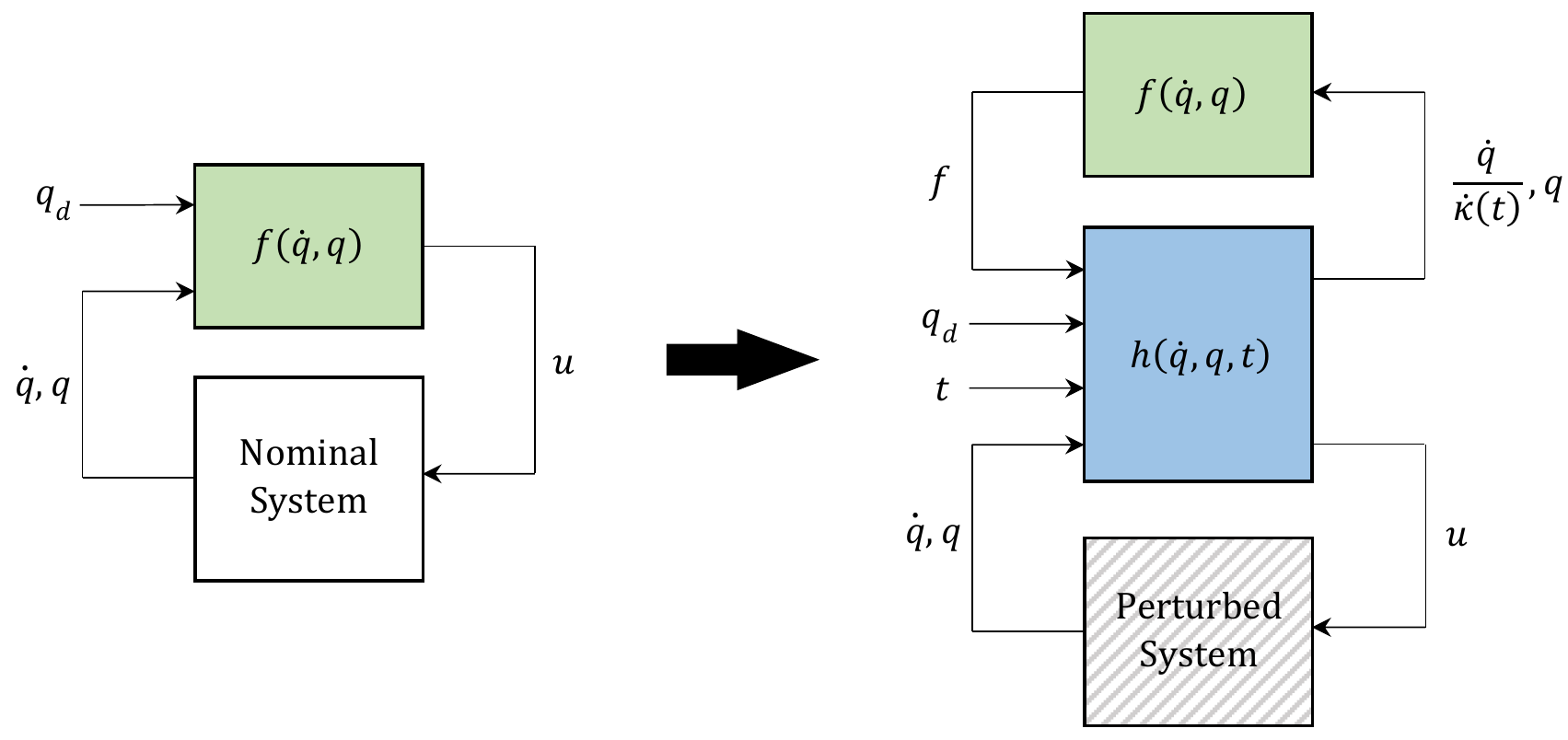}
\caption{Derivation of a PTC for a perturbed system from an ITC of the nominal system.}
\label{fig:0}
\end{figure}

\section{Examples}
\label{sec:IV}

In this section, some candidate class $\mathcal{K}$ functions are introduced and a two-link manipulator is controlled using the proposed scheme.  

\subsection{Mapping functions}
\label{sec:IV-A}

According to Definition \ref{def:1} and Lemma \ref{lem:1}--item 4, a class $\mathcal{K}$ function can be considered in the following forms for any $a_i>0$, $b_i>0$ and $c_i>0$:
\begin{equation}
\label{eq:29}
\kappa(t)=\textstyle\sum_{i=1}^{n}a_it^{b_i}/(\tau-t)^{c_i}
\end{equation}
\begin{equation}
\label{eq:30}
\kappa(t)=\textstyle-\sum_{i=1}^{n}a_i\ln(1-t/\tau)
\end{equation}
\begin{equation}
\label{eq:30p}
\kappa(t)=\textstyle\sum_{i=1}^{n}a_i\tan^{b_i}(\pi/2\cdot t/\tau)
\end{equation}

A class $\mathcal{M}$ function can be considered as the inverse function of some $\kappa(t)$. According to Lemma \ref{lem:1}--item 4, if $\mu_i(t)\in\mathcal{M}$, then $1/n\sum_{i=1}^{n}\mu_i(t)\in\mathcal{M}$. This kind of definition increases the flexibility of candidate functions and consequently, the conditions of Lemma \ref{lem:2} may be satisfied. 

\subsection{Two-link manipulator}
\label{sec:IV-B}

The 2-DOF manipulator shown in Fig. \ref{fig:1} is considered as the numerical example\footnote{MATLAB\textsuperscript{\tiny\textregistered} codes and Simulink\textsuperscript{\tiny\textregistered} models for this example can be found in \href{https://github.com/a-shakouri/prescribed-time-control-examples}{https://github.com/a-shakouri/prescribed-time-control-examples}}. The state variables and the control inputs are defined as $q=[q_1,\ q_2]^T$ in degrees, $\dot{q}=[\dot{q}_1,\ \dot{q}_2]^T$ in degrees per second, and $u=[u_1,\ u_2]^T$ in Newton-meters, respectively. This manipulator can be modeled in the way defined by system \eqref{eq:1} where $M(q)$, $C(\dot{q},q)$, and $g(q)$ are obtained from \cite{spong} and the parameters are considered as $l_1=l_2=1\ \mathrm{m}$ for the length of links, $l_{c1}=l_{c2}=0.5\ \mathrm{m}$ for the center of mass positions, $m_1=m_2=1\ \mathrm{kg}$ for the mass of the links, $I_1=I_2=0.33\ \mathrm{kg\cdot m^2}$ for the moments of inertia, and $g_0=9.81\ \mathrm{m/s^2}$ for the gravitational acceleration:
\begin{equation}
\label{eq:33_1}
M_{11}(q)=\cos(q_2)+2.16
\end{equation}
\begin{equation}
\label{eq:33_2}
\begin{split}
M_{12}(q)=M_{21}(q)&=0.5\sin(q_1+q_2)\sin(q_1)\\
&+0.25\sin^2(q_1+q_2)+0.33
\end{split}
\end{equation}
\begin{equation}
\label{eq:33_3}
M_{22}(q)=0.25\sin^2(q_1+q_2)+0.33
\end{equation}
\begin{equation}
\label{eq:34}
C(q,\dot{q})=-\cos(q_1+q_2)\begin{bmatrix}
\dot{q}_2 & \dot{q}_1+\dot{q}_2 \\
-\dot{q}_1 & 0
\end{bmatrix}
\end{equation}
\begin{equation}
\label{eq:35}
g(q)=g_0\begin{bmatrix}
1.5\cos(q_1)+0.5\cos(q_1+q_2) \\
0.5\cos(q_1+q_2)
\end{bmatrix}
\end{equation}
The initial state is $q=\dot{q}=0$ at $t_0=0$ and the desired position to be reached is $q_d=[90^\circ,0]^T$ that is an unstable equilibrium point for the open-loop system.

\begin{figure}[!h]
\centering\includegraphics[width=0.4\linewidth]{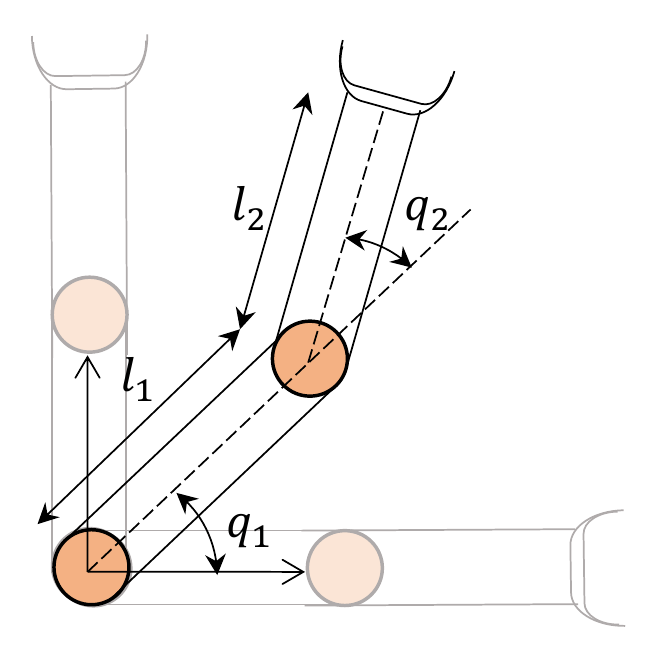}
\caption{A schematic view for a 2-DOF manipulator with revolute joints.}
\label{fig:1}
\end{figure}

Consider a joint limit avoidance PD controller with gravity compensation in the following form:

\begin{equation}
\label{eq:36}
f(\dot{q},q)=Pq_e+D\dot{q}+g(q)+\gamma_q(q)
\end{equation}
where $q_e=q-q_d$, $P=-0.1\mathbb{I}_2$, $D=-\mathbb{I}_2$, and $\gamma_q=[{\gamma_q}_1,\ {\gamma_q}_2]^T$ is the acceleration vector that prevent the manipulator from violating its joint limits. For the first joint no limits are considered and for the second joint it is assumed that the operation bound should be within $-3^\circ$ to $3^\circ$, and the distance limit of the potential influence is considered to be $0.5^\circ$. Therefore, ${\gamma_q}_1=0$, and ${\gamma_q}_2$ can be formulated as \cite{khatib}:
\begin{equation}
\label{eq:37}
{\gamma_q}_2(q)=\left\{
\begin{array}{ll}
\left(\frac{1}{q_2+3^\circ}-\frac{1}{0.5^\circ}\right)\frac{10^{-9}}{(q_2+3^\circ)^2} & \quad q_2<-2.5^\circ \\
\left(\frac{1}{3^\circ-q_2}-\frac{1}{0.5^\circ}\right)\frac{-10^{-9}}{(3^\circ-q_2)^2} & \quad q_2>2.5^\circ \\
0 & \quad |q_2|\leq2.5^\circ
\end{array}\right.
\end{equation}
The corresponding PTC is designed using Algorithm \ref{al:1} with $\epsilon=1\textrm{ s}$ and a mapping function defined by \eqref{eq:29} with $n=1$, $a_1=20$, $b_1=c_1=1$, and $\tau=20\ \mathrm{s}$ (such that $\dot{\kappa}(0)=1$ is satisfied, thus we have a class $\mathcal{K}_1$ function). 

Suppose the 2-DOF manipulator dynamics is defected by a disturbed acceleration $d(t)\in\mathbb{R}^2$ such that $d(t)=\int_0^td^\prime(t)dt$ is a Wiener process where $d^\prime(t)$ is a zero-mean normally distributed vector with a standard deviation of $0.1\ \mathrm{N\cdot m}$. The disturbances are assumed considerably large in order to exaggerate what is happening in the system. 

Fig. \ref{fig:2} compares the closed-loop response of the proposed PTC, $h(\dot{q},q,t)$, with the core ITC, $f(\dot{q},q)$, formulated in \eqref{eq:36}. Several simulations are carried out for the perturbed case to demonstrate the error bounds. As it is shown, the ITC (plotted in black and gray) is almost useless for the perturbed case, while the PTC (plotted in dark and light blue) perfectly performs its task and converges at $\tau=20\ \mathrm{s}$.

\begin{figure*}[!h]
\centering\includegraphics[width=0.9\linewidth]{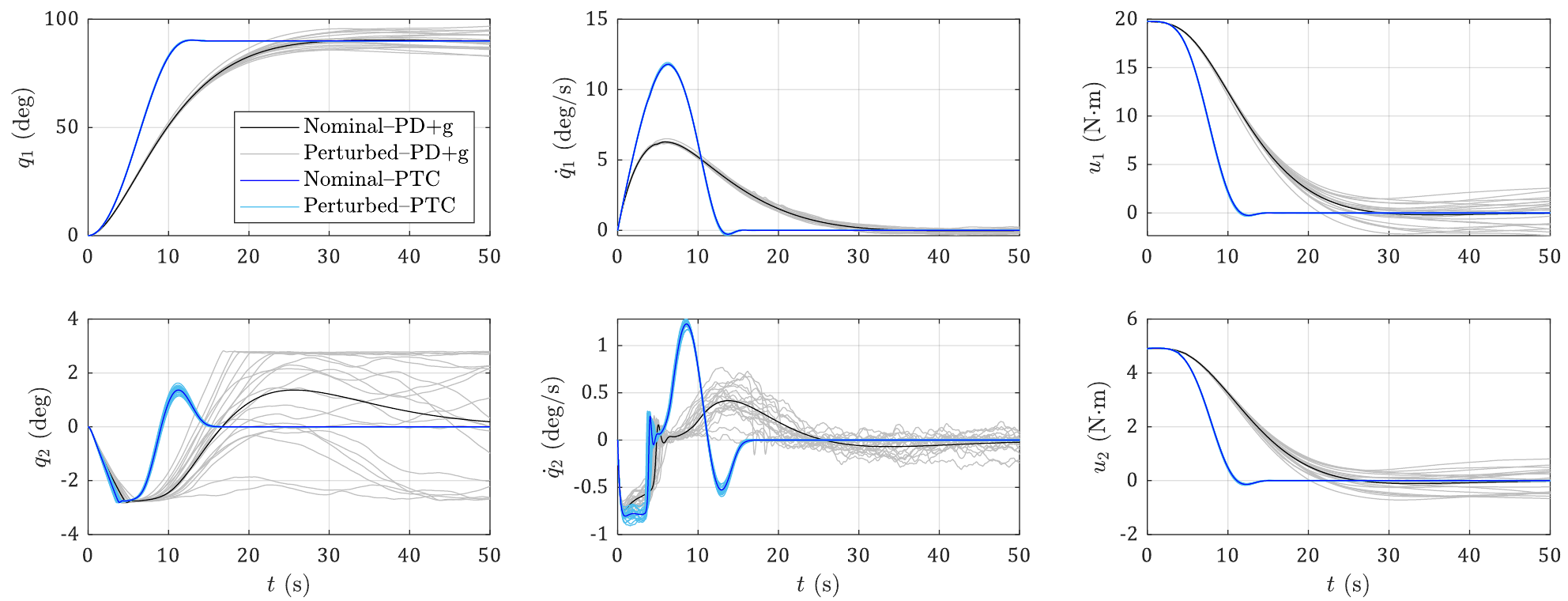}
\caption{A 2-DOF robot under PD controller with gravity compensation (PD+g) and prescribed-time controller (PTC) with $\tau=20\hspace{1mm}\mathrm{s}$}
\label{fig:2}
\end{figure*}

\section{Conclusions}
\label{sec:V}

A class of prescribed-time controllers (PTCs) has been introduced and analyzed in this paper capable of actively rejecting disturbances with unknown bounds without observing the disturbances. It has been shown that a stable PTC with an arbitrary convergence time can be designed just by substituting a conventional infinite-time controller (ITC) into a time-dependent formula. It has been proved that if the ITC is asymptotically stable, then the obtained PTC is prescribed-time stable. To expand the applications of the proposed approach, many ITCs can be incorporated with the presented method to provide their infinite-time properties in a prescribed time window. 


\bibliographystyle{IEEEtran}
\bibliography{root}

\end{document}